	\definecolor{lightred}{rgb}{1, .60,.60}
\renewcommand{\phi}{\varphi}
\renewcommand{\iff}{\leftrightarrow}
\newcommand{\SB}[1]{\mathbf{{\Sigma}}^{0}_{#1}}
\newcommand{\DB}[1]{\mathbf{{\Delta}}^{0}_{#1}}
\newcommand{\PB}[1]{\mathbf{{\Pi}}^{0}_{#1}}
\newcommand{\DL}[1]{\Delta^{0}_{#1}}
\newcommand{\seq}[1]{\langle{#1}\rangle}
\newcommand{\ck}{\omega^{ck}_1}
\newcommand{\N}{\mathbb{N}} 
\newcommand{\concat}{%
  \mathbin{\mathpalette\dotieconcat\relax}%
}
\newcommand{\dotieconcat}[2]{
  \text{\raisebox{.8ex}{$\smallfrown$}}%
}
\newcommand{\Pset}{\EuScript{P}}
\newcommand{\until}[1]{_{}\upharpoonright_{#1}}
\renewcommand{\subset}{\subseteq}
\renewcommand{\until}[1]{\upharpoonright_{#1}}
\theoremstyle{plain}
\newtheorem{theorem}{Theorem}
\newcommand{\dom}{\mathrm{dom}}
\newtheorem{lemma}[theorem]{Lemma}
\newtheorem{proposition}[theorem]{Proposition}
\theoremstyle{definition}
\newtheorem{definition}[theorem]{Definition}
\newcommand{\emptystring}{\emptyset}
\newcommand{\FF}{\mathrm{F}}
\newcommand{\II}{\mathrm{I}}
\newcommand{\rank}{\mathrm{rank}}
\renewcommand{\P}{\mathbb{P}}
\newcommand{\symdiff}{\Delta}
\newcommand{\define}[1]{\emph{#1}}
\newcommand{\from}{\colon}
\DeclareMathOperator{\ran}{ran}
\newcommand{\powset}{\mathcal{P}}
\newcommand{\bfDelta}{\mathbf{\Delta}}
\newcommand{\bfSigma}{\mathbf{\Sigma}}
\newcommand{\bfPi}{\mathbf{\Pi}}
\newcommand{\cantor}{2^\omega}
\newcommand{\baire}{\omega^\omega}
\newcommand{\inters}{\cap}
\newcommand{\coord}[1]{({#1})} 
\newcommand{\lv}{*} 
\renewcommand{\supset}{\supseteq}
\theoremstyle{remark}
\title{Jump Operations for Borel Graphs}
\thanks{The authors were supported by the Marsden fund of New Zealand. The
authors would also like to thank the Institute for Mathematical Sciences
and the National University of Singapore for their support at the 2015
summer school in logic, where several results of the paper were conceived.
The second author was partially supported by NSF grant DMS-1500974 and the
John Templeton Foundation under Award No.\ 15619.} 
\author{Adam R.~Day}
\address[Adam R.~Day]{Victoria University of Wellington}
\email{adam.day@vuw.ac.nz}
\author{Andrew S.~Marks}
\address[Andrew S.~Marks]{University of California, Los Angeles}
\email{marks@math.ucla.edu}
\date{\today}
\begin{document}

\begin{abstract}
 We investigate the class of bipartite Borel graphs organized by the
order of Borel homomorphism. We show that this class is unbounded by
finding a jump operator for Borel graphs analogous to a jump operator
of Louveau for Borel equivalence relations. The proof relies on a
non-separation result for iterated Fr\'echet ideals and filters due to Debs
and Saint Raymond. We give a new proof of this fact using 
effective descriptive set theory. We also investigate
an analogue of the Friedman-Stanley jump for Borel graphs. This
analogue does not yield a jump operator for bipartite Borel graphs. 
However, we use it to answer a question of Kechris and Marks
by showing that there is a Borel graph with no Borel homomorphism to a
locally countable Borel graph, but each of whose connected components
has a countable Borel coloring.
\end{abstract}

\maketitle

\section{Introduction}

The study of Borel graphs and their
combinatorial properties is a growing area of research
which has developed at the interface of descriptive set theory and
combinatorics, and has connections with probability, ergodic theory, and
the study of graph limits. The first systematic study of Borel graphs was
the paper of Kechris, Solecki and Todorcevic~\cite{MR1667145}, and~\cite{KM} is a recent survey of the field. 
We largely follow the conventions and notation from~\cite{KM}. A
\define{Borel graph} $G$ on $X$ is a graph whose vertex set $X$ is a Polish
space, and whose edge relation $G$ is Borel. We will typically abuse
notation and identify a graph with its edge relation.

Suppose $G$ and $H$ are graphs on the vertex sets $X$ and $Y$. Then a
\define{homomorphism} from 
$G$ to $H$ is a function $h \from X \to
Y$ such that $\forall x, y \in X (x G y \implies h(x) H h(y))$. That is, $f$ maps adjacent
vertices in $G$ to adjacent vertices in $H$. 
In classical
combinatorics, the book of Hell and Ne{\v{s}}et{\v{r}}il \cite{MR2089014}
is a good introduction to the theory of graph homomorphisms.

If $G$ and $H$ are Borel graphs on $X$ and $Y$, then we write $G
\preceq_B H$ if there is a Borel homomorphism $h \from X \to Y$ from $G$ to
$H$. Borel homomorphisms play a key role in the study of Borel colorings of Borel
graphs.
Recall that a \define{Borel coloring} of a Borel graph $G$ on a Polish
space $X$ is a
Borel function $c \from X \to Y$ to a Polish space $Y$ such that $\forall
x, y \in X (x G y \implies c(x) \neq c(y))$. So for example, if we take a
complete graph on $n$ vertices $K_n$, then $G \preceq_B K_n$
if and only if $G$ has a Borel $n$-coloring. 
More generally, if there is a Borel homomorphism from $G$ to $H$, then 
composing this homomorphism with a Borel coloring of
$H$ yields a Borel coloring of $G$. Thus, the problem of definably coloring
$G$ is no more difficult than coloring $H$. 
In an early
breakthrough result, Kechris, Solecki and Todorcevic \cite{MR1667145}
isolated a canonical graph $G_0$ having no Borel $\omega$-coloring such
that for every Borel (indeed, analytic) graph $G$, either $G$ has a
Borel coloring with countably many colors, or $G_0 \preceq_B G$.
Hence, $G_0$ is the canonical obstruction to an analytic graph having
countable Borel chromatic number. 

In this paper, we study the class of Borel graphs organized under
$\preceq_B$. We can view $\preceq_B$ as a general way of organizing all
Borel graphs by the relative difficulty of definably solving problems
on them, such as coloring, where solutions can be pulled back under homomorphisms. 

An important subclass of the Borel graphs is the bipartite graphs, or
equivalently, the graphs with no odd cycles. 
Such graphs are of particular interest in descriptive set theory since they
isolate the descriptive-set-theoretic difficulties that occur in definably coloring
Borel graphs, from the difficulties that arise if the graph has high
classical chromatic number (a graph has classical chromatic number $> 2$ if
and only if it is not bipartite). 
For example, the graph $G_0$ is acyclic and hence it is classically 2-colorable. 

Note that there may be no Borel witness to the
bipartiteness of a bipartite Borel graph. Indeed, Borel graphs $G$ that do possess
such a witness are trivial in the since that both $G \preceq_B K_2$ and
$K_2 \preceq_B G$ where $K_2$ is the complete graph on two vertices. 

One of our main theorems is that the class of
bipartite Borel graphs is unbounded under $\preceq_B$. In contrast, note that among all
Borel graphs, there is a trivial 
 example of a maximal Borel
graph under $\preceq_B$, specifically, the complete graph on any uncountable
Polish space.

Our proof of this
theorem is inspired by a jump operator of Louveau which comes from the theory
of Borel reducibility between Borel equivalence relations \cite{MR1327979}.
If $G$ is a Borel graph on a Polish space $X$, 
define $G^{\lv}$ to be the Borel graph defined on
$X^{\omega}$ where 
\[x G^{\lv} y \iff \exists n \forall i \geq n
(x\coord{i} G y\coord{i}).\] 
We show this induces a jump operation on bipartite
Borel graphs organized under $\preceq_B$. Note that if $G$
is bipartite, then so is $G^{\lv}$. 

\begin{theorem}\label{Louveau_jump}
  Let $G$ be a nontrivial bipartite analytic graph. Then $G \preceq_B
  G^{\lv}$ and $G^{\lv} \npreceq_B G$. 
\end{theorem}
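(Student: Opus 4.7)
For $G\preceq_B G^{\lv}$, the continuous diagonal map $\Delta\from X\to X^\omega$, $\Delta(x)=(x,x,\ldots)$, is a Borel homomorphism: if $xGy$, then every coordinate of the pair $(\Delta x,\Delta y)$ is a $G$-edge, so $\Delta x \mathbin{G^{\lv}} \Delta y$.

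For $G^{\lv}\npreceq_B G$, I would suppose for contradiction that $h\from X^\omega\to X$ is a Borel homomorphism. Since $G$ is nontrivial it has some edge $(a,b)$; let $\iota\from 2^\omega\to X^\omega$ be the Borel inclusion $\iota(x)(i)=a$ if $x(i)=0$ and $b$ if $x(i)=1$, and set $\tilde h=h\circ\iota\from 2^\omega\to X$. Because simple graphs have no loops at $a$ or $b$, $\iota(x)\mathbin{G^{\lv}}\iota(y)$ holds iff $x$ and $y$ are cofinitely bitwise-complementary; hence $\tilde h(x)\mathbin{G}\tilde h(y)$ whenever $y$ is eventually $\neg x$. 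Bipartiteness then forces $\tilde h$ to send $E_0$-equivalent points to the same side of the (non-Borel) bipartition and $x,\neg x$ to opposite sides: if $xE_0y$ then $\neg x$ is eventually complementary to both $x$ and $y$, so $\tilde h(x)$ and $\tilde h(y)$ both lie opposite $\tilde h(\neg x)$.

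To extract a contradiction I would iterate. Applying $h$ coordinatewise yields a Borel homomorphism $G^{\lv\lv}\to G^{\lv}$ and, by composition with $h$, one from $G^{\lv\lv}$ into $G$; inducting along recursive ordinals $\alpha$ (with a fixed coding at limit stages) produces a Borel homomorphism from a suitably defined iterated jump $G^{\lv^\alpha}$ into $G$. Restricted to the appropriate two-point power these become Borel maps on $2^{\omega^\alpha}$ respecting iterated $E_0$-type invariances and negation-type swaps governed by the $\alpha$-fold iterated Fr\'echet filter on $\omega^\alpha$. Pushing $\alpha$ past the Borel rank of $h$ (effectively, past a recursive ordinal coding $h$ relative to a parameter) turns the bipartition-respecting constraint into a Borel separation of an iterated Fr\'echet ideal from its dual, contradicting the Debs--Saint Raymond non-separation theorem.

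The main obstacle is exactly this iterative bookkeeping: defining the iterated jumps $G^{\lv^\alpha}$ cleanly at limit ordinals, checking that the coordinatewise-lifted homomorphisms maintain the bipartition structure, and invoking Debs--Saint Raymond at the right Borel rank. The effective-descriptive-set-theoretic reproof of the non-separation theorem promised elsewhere in this paper is presumably tailor-made for this application, since it lets one bound the required iteration depth in terms of the effective Borel rank of $h$.
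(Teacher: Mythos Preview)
Your overall architecture matches the paper's: the diagonal map handles $G\preceq_B G^{\lv}$; for the converse you iterate the jump through all countable ordinals, restrict to the two-point edge $\{a,b\}$ so that the iterated jump becomes $K_2^\alpha$, and then aim for a contradiction with the Debs--Saint Raymond non-separation theorem for $\FF^\alpha$ versus $\II^\alpha$. That much is exactly right, and your observation that $x\symdiff^\alpha y\in\FF^\alpha$ forces $\tilde h(x),\tilde h(y)$ to opposite sides while $x\symdiff^\alpha y\in\II^\alpha$ forces them to the same side is the correct combinatorial content.

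The gap is in the last step, where you write that ``pushing $\alpha$ past the Borel rank of $h$ \ldots\ turns the bipartition-respecting constraint into a Borel separation.'' The bipartition of $G$ is in general not Borel, so ``which side $\tilde h(x)$ lands on'' is not a Borel predicate on $X^\alpha$, and the Borel rank of the function $h$ is not the relevant parameter. You have not named a Borel set of bounded rank that separates $\FF^\alpha$ from $\II^\alpha$. The paper supplies the missing idea: rather than the (non-Borel) sides of the bipartition, one works with the pair of sets $D_{G,1}=G$ and $D_{G,2}=\{(x,y):\exists z\,(xGz\wedge zGy)\}$ in $X\times X$. These are disjoint analytic (since $G$ is triangle-free), hence separated by some Borel set of rank $\beta$. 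A Borel homomorphism $K_2^\alpha\to G$ then pulls this separator back, via $x\mapsto(\tilde h(x),\tilde h(0))$ or equivalently via Proposition~\ref{potential_complexity}, to a potentially $\bfDelta^0_\beta$ set separating $D_{K_2^\alpha,1}$ from $D_{K_2^\alpha,2}$; a short generic-pair argument (Lemma~\ref{lem: generic pair}) converts this into an honest $\bfDelta^0_\beta$ separation of $\FF^\alpha$ from $\II^\alpha$. Choosing $\alpha$ large relative to this fixed $\beta$ --- not relative to the rank of $h$ --- then contradicts Theorem~\ref{thm: non-separation}. So the parameter you must outrun is the Borel rank of a separator for $D_{G,1}$ and $D_{G,2}$, and identifying that separator is precisely the step your sketch is missing.
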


By nontrivial, we mean that $G$ has at least one edge. Our theorem here in
fact generalizes to $G$ which are triangle-free.

Our proof of Theorem~\ref{Louveau_jump} requires an analysis of iterated
Fr\'echet filters that differs from that in \cite{MR1327979}. In
particular, our proof uses the following theorem due to Debs and Saint
Raymond.

\begin{theorem}[{\cite[Theorem 3.2, proof of Theorem 6.5]{MR2520152}}]\label{thm: non-separation}
  Let $F_\alpha, I_\alpha \subset \powset(\omega)$ be the $\alpha$\textsuperscript{th}
  iterate of the Fr\'echet ideal and Fr\'echet filter. Then 
  $I_\alpha$ and $F_\alpha$ cannot be separated by $\bfPi^0_{\alpha+1}$
  sets (but they can be separated by disjoint $\bfSigma^0_{\alpha+1}$
  sets).
\end{theorem}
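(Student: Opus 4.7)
The plan is to address the two halves of the statement separately.

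For the separation by disjoint $\bfSigma^0_{\alpha+1}$ sets, a transfinite induction on $\alpha$ based on the Fubini definition of the iterates shows that $F_\alpha$ and $I_\alpha$ are themselves $\bfSigma^0_{\alpha+1}$ subsets of $\powset(\omega)$. Since $F_\alpha$ is a proper filter and $I_\alpha$ is its dual ideal they are disjoint, so $F_\alpha$ and $I_\alpha$ themselves supply the required disjoint $\bfSigma^0_{\alpha+1}$ separators.

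For the non-separation by $\bfPi^0_{\alpha+1}$, the plan is to prove by transfinite induction on $\alpha$ the stronger effective statement that, for every real parameter $x$, no $\Pi^0_{\alpha+1}(x)$ set separates $I_\alpha$ from $F_\alpha$; relativizing to a real coding any putative separator then yields the boldface statement. The base case is immediate because the Fr\'echet filter and ideal are both dense in $\cantor$ and so cannot be separated by a closed set.

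For the successor step, suppose toward a contradiction that $S \in \Pi^0_{\alpha+2}(x)$ separates $I_{\alpha+1}$ from $F_{\alpha+1}$, and write $S = \bigcap_n U_n$ with $U_n \in \Sigma^0_{\alpha+1}(x)$ uniformly in $n$. The idea is to use the Fubini structure of the iterates, namely $F_{\alpha+1} = \{A : \{n : (A)_n \in F_\alpha\}\text{ cofinite}\}$ together with its dual for $I_{\alpha+1}$, to continuously encode each $B \in \powset(\omega)$ as a sequence of sections $((\tilde B)_n)_n$ that switches adaptively between canonical members of $F_\alpha$ and $I_\alpha$ according to $B$. The ``cofinitely many $n$'' outer quantifier in the definitions of $F_{\alpha+1}$ and $I_{\alpha+1}$ then interacts with the $\bigcap_n$ layer of $S$ to produce a $\Pi^0_{\alpha+1}(x)$ separator of $I_\alpha$ from $F_\alpha$, contradicting the inductive hypothesis. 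For limit $\lambda$, where $F_\lambda, I_\lambda$ are defined along a cofinal sequence $\alpha_n \nearrow \lambda$, a $\Pi^0_{\lambda+1}(x)$ separator admits a $\Pi^1_1(x)$ Borel code, and a Spector--Gandy $\Pi^1_1$-boundedness argument applied to the natural rank on Borel codes localizes us to a single $\alpha_n$ and returns us to a successor case.

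The main obstacle will be the successor step, specifically designing the encoding $B \mapsto \tilde B$ and the subsequent extraction carefully enough that one full level of complexity is genuinely shed rather than preserved. This is precisely where effective descriptive set theory is indispensable: recursive $\Pi^1_1$ codes for Borel sets and the rank theory attached to them permit the quantifier book-keeping to be performed uniformly across all countable $\alpha$, obviating the game-theoretic machinery of \cite{MR2520152}.
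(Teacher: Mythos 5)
Your plan has concrete gaps in both halves, and it takes a genuinely different (but, as sketched, non\hyp working) route from the paper.

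\textbf{The separation direction is wrong as stated.} The transfinite induction does \emph{not} show that $F_\alpha$ and $I_\alpha$ are themselves $\bfSigma^0_{\alpha+1}$. Indeed $F_1$ is $\bfSigma^0_2$, so
$F_2 = \{x : \exists n\,\forall m>n\,(x(m)\in F_1)\}$
is a countable union of countable intersections of $\bfSigma^0_2$ sets, i.e.\ $\bfSigma^0_4$, not $\bfSigma^0_3$; each successor step after the first adds \emph{two} levels rather than one. The assertion is only true for $\alpha\in\{0,1\}$ or $\alpha$ a limit. To get disjoint $\bfSigma^0_{\alpha+1}$ \emph{supersets} one must interpose a quantifier\hyp switching argument: replace ``$\exists n\,\forall m>n\,\exists j\,\forall k>j$'' by ``$\exists n\,\forall m>n\,\forall j\,\exists k>j$'' (which still contains the original sets and keeps the two sides disjoint because $A_{n,k}\cap B_{n,k}=\emptyset$), and then apply $\bfSigma$\hyp reduction. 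This is the paper's Lemma on the ``separating core,'' and it cannot be omitted.

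\textbf{The non-separation argument is also incomplete.} The base case is fine in spirit but you should say ``$G_\delta$'' (i.e.\ $\bfPi^0_2$), not ``closed'': both $F_1$ and $I_1$ are dense, so any $\bfPi^0_2$ set containing either is comeager, and two comeager sets meet. The successor step, however, is where the real work lies and your description does not contain the necessary idea. An encoding of $B\subset\omega$ by a sequence of sections alternately chosen from canonical members of $F_\alpha$ and $I_\alpha$ reduces the pair $(I_{\alpha+1},F_{\alpha+1})$ to $(I_1,F_1)$ rather than to $(I_\alpha,F_\alpha)$, while the constant encoding $A\mapsto(A,A,\dots)$ of $X^\alpha$ into $X^{\alpha+1}$ is a continuous map and so pulls $\bfPi^0_{\alpha+2}$ back to $\bfPi^0_{\alpha+2}$, shedding nothing. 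Dropping exactly one level requires an \emph{adaptive} encoding parameterized by a sufficiently generic real $g$, so that the Turing jump of the encoded point becomes uniformly computable from the decoded point together with $g$ and a fixed iterate of $\emptyset'$; this jump\hyp collapse is the engine of the whole proof and is absent from your sketch. The limit case as you describe it also does not work: a $\bfPi^0_{\lambda+1}$ set is $\bigcap_n A_n$ with the $A_n$ of rank cofinal in $\lambda$, and $\bfPi^1_1$\hyp boundedness applied to a single Borel code of rank $\lambda+1$ gives no localization to one $\alpha_n$; there is no successor case to ``return to.''

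For contrast, the paper avoids the induction entirely. It builds one continuous map $\rho_g:\cantor\to X^\gamma$ (with $g$ arithmetically generic), proves $(\rho_g(x))^{(\gamma)}\le_T x\oplus g\oplus 0^{(\gamma)}$ uniformly, and converts any putative $\bfDelta^0_{\gamma+1}$ separator at level $\gamma$ directly into a $\Delta^0_2$-with-parameter separator of $F_1$ and $I_1$, contradicting the Baire category base case. The recursion\hyp theoretic genericity argument is what makes the complexity collapse work; rank theory on Borel codes alone would not supply it.
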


In Section~\ref{Fr_section}, we give a new proof of this theorem which uses
different methods than \cite{MR2520152}. In particular, it uses
effective descriptive set theory and an analysis of suitably chosen generic
reals.

A more well-known jump operation on Borel equivalence relations is the 
Friedman-Stanley jump~\cite{MR1011177}. We can also define an operation on
Borel graphs similar to the Friedman-Stanley jump. 
Suppose $G$ is a Borel graph on a Polish space $X$. Define $X^{+} \subset
X^{\omega}$ by $x \in X^+$ if $i_1 \neq i_2$ implies $x\coord{i_1}$ and
$x\coord{i_2}$ are in different connected components of $G$. 
Now we define the Borel graph $G^+$ on $X^+$ by:
\[x G^+ y \iff \forall i \exists j (x\coord{i} G
y\coord{j}) \land \forall i
\exists j (y\coord{i} G x\coord{j}).\]
Now it is easy to see that if $G$ is a Borel
graph with countably many connected components, then there is a Borel
homomorphism from $G^+$ to $G$; take a Borel ordering $<$ on the
connected components of $G$ of ordertype $\omega$, and then define a
homomorphism $h$ from $G^+$ to $G$ by letting $h(x)$ be the $<$-least
$x\coord{j}$ in the sequence $(x\coord{i})_{i \in \omega}$.
Similarly, for such a $G$, there is a Borel
homomorphism from $G^{++}$ (which has continuum many connected components)
to $G^{+}$.  Characterizing the Borel
graphs $G$ such that $G^+ \npreceq_B G$ remains an open question.
However, we show that in at least one special case $G^+ \npreceq_B G$:
when $G$ is a nontrivial locally countable bipartite Borel graph with
meager connectedness relation. More generally,
\begin{proposition}\label{FS_no_homomorphism}
  Suppose $G$ is a Borel graph on a Polish space $X$ whose connectedness
  relation is meager and there is a Borel homeomorphism $T \from X \to X$ such that 
  $T(x) \mathrel{G} x$ for every $x \in X$. Then for every locally
  countable Borel graph $H$, $G^+ \npreceq_B H$. 
\end{proposition}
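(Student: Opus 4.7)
The plan is to argue by contradiction. Suppose $f \from X^+ \to Y$ is a Borel homomorphism from $G^+$ to a locally countable Borel graph $H$ on a Polish space $Y$. The aim is to show that $f$ must be constant on some comeager subset $C \subseteq X^+$, which contradicts being a homomorphism as soon as one produces a $G^+$-edge both of whose endpoints lie in $C$.

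The first main step is to exhibit a rich family of $G^+$-edges built from $T$ and from permutations. For $\sigma \in S_\infty$, let $\tau_\sigma \from X^\omega \to X^\omega$ be the Borel homeomorphism $(\tau_\sigma x)(i) = x(\sigma^{-1}(i))$, and let $T^{(\omega)}$ denote the coordinatewise action of $T$ on $X^\omega$; both restrict to Borel homeomorphisms of $X^+$. A direct verification using $T(z)\, G\, z$ shows $(x,\, \tau_\sigma T^{(\omega)} x) \in G^+$ for every $x \in X^+$ and every $\sigma \in S_\infty$, with $\sigma$ serving as the required witness permutation. By Lusin-Novikov, decompose $H$ as a countable disjoint union of graphs of Borel partial functions $h_n \from Y \to Y$; then for each such edge there is a unique $n(x,\sigma) \in \omega$ with $f(\tau_\sigma T^{(\omega)} x) = h_{n(x,\sigma)}(f(x))$, giving a Borel function $n \from X^+ \times S_\infty \to \omega$. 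Two iterated Baire-category applications (first on $S_\infty$ with $x$ fixed, then on $X^+$) produce an integer $n_0$, a comeager $A \subseteq X^+$, and for each $x \in A$ a comeager $C_x \subseteq S_\infty$ on which $n \equiv n_0$.

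The second step extracts $S_\infty$-invariance of $f$ on a comeager set. Let $D \subseteq X^+$ be a comeager $G_\delta$ on which $f$ is continuous. Using the commutation $\tau_\sigma T^{(\omega)} = T^{(\omega)} \tau_\sigma$ and taking limits $\sigma \to \mathrm{id}$ along elements of $C_x$ that keep $\tau_\sigma T^{(\omega)} x$ in $D$, one deduces $f \circ T^{(\omega)} = h_{n_0} \circ f$ on a comeager subset of $X^+$. Running the same argument with $T^{-1}$ in place of $T$ (which still satisfies $T^{-1}(z)\, G\, z$) produces some $h_{m_0}$ which is a two-sided inverse of $h_{n_0}$ on the relevant image of $f$, rendering $h_{n_0}$ injective there. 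Applying the $T^{(\omega)}$-intertwining at $\tau_\sigma x$ gives $f(\tau_\sigma T^{(\omega)} x) = f(T^{(\omega)} \tau_\sigma x) = h_{n_0}(f(\tau_\sigma x))$, while we also have $f(\tau_\sigma T^{(\omega)} x) = h_{n_0}(f(x))$; cancelling $h_{n_0}$ yields $f(\tau_\sigma x) = f(x)$ on a comeager subset of $X^+ \times S_\infty$.

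The third step, where the meagerness of $E_G$ enters essentially, upgrades this generic $S_\infty$-invariance to outright constancy of $f$ on a comeager subset of $X^+$, and is the main obstacle. Meagerness of $E_G$ ensures $X^+$ is comeager in $X^\omega$ and that for a comeager set of $x \in X^+$ the range of $x$ is dense in $X$, which makes the $S_\infty$-orbit $\{\tau_\sigma x : \sigma \in S_\infty\}$ dense in $X^\omega$. Pick two generic $x_0, x_1 \in X^+$. By Kuratowski-Ulam, the set of $\sigma \in S_\infty$ with $\tau_\sigma x_0 \in D$ and $f(\tau_\sigma x_0) = f(x_0)$ is comeager, and by density of the $S_\infty$-orbit of $x_0$ one can select $\sigma_n$ in this comeager set with $\tau_{\sigma_n} x_0 \to x_1$; continuity of $f$ at $x_1 \in D$ then yields $f(x_0) = \lim_n f(\tau_{\sigma_n} x_0) = f(x_1)$. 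Hence $f$ is constant on a comeager $C \subseteq X^+$, say $f \equiv c^*$. Finally, since $T^{(\omega)}$ is a Borel homeomorphism of $X^+$, the set $C \cap (T^{(\omega)})^{-1}(C)$ is comeager and therefore nonempty; for any $x$ in it, $(x, T^{(\omega)} x) \in G^+$ but $f(x) = f(T^{(\omega)} x) = c^*$, yielding $c^* \mathrel{H} c^*$, contradicting the irreflexivity of $H$.
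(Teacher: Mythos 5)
Your Step 1 contains the decisive gap, and it is not the step you flagged as hard (Step 3). You assume that the Borel function $n\from X^+ \times S_\infty \to \omega$ (recording which Lusin--Novikov branch witnesses the edge $(f(x), f(\tau_\sigma T^{(\omega)}x))$) is constant on a comeager set. Baire category does not give this: a countable-valued Baire-measurable function on a Polish space is continuous, hence locally constant, on a comeager set, but this only produces some $n_0$ for which $n^{-1}(n_0)$ is \emph{nonmeager} (comeager in some basic open $U\times V$). ``Two iterated Baire-category applications'' cannot upgrade nonmeager to comeager in either the $\sigma$- or the $x$-variable. Yet every subsequent move relies on the comeager version: the limit $\sigma\to\mathrm{id}$ requires $\mathrm{id}$ to lie in the closure of the good set $C_x$ (true if $C_x$ is comeager in $S_\infty$, false in general if $C_x$ is only comeager in some $N_q$ with $q\ne\emptyset$), the two-sided cancellation requires the intertwining $f\circ T^{(\omega)} = h_{n_0}\circ f$ to hold on a comeager subset of $X^+$, and the orbit-density argument in Step 3 requires the good $\sigma$'s to be globally comeager, since it must approximate arbitrary $x_1$ from the $S_\infty$-orbit of $x_0$ while staying inside the good set. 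More tellingly, the intended terminal conclusion --- $f$ is constant on a comeager set --- is simply not a consequence of $f$ being a Borel homomorphism; for instance, $f$ could be constant on one open set and constant with a different value on another.

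The paper's proof is built precisely to avoid this trap. It does not attempt a single global reduction; it localizes from the outset, splitting into three cases according to the local behavior of $h$ on a comeager set $C$ where $h$ is continuous: (1) $h\restriction C$ constant on some open set, (2) $h\restriction C$ depending only on finitely many coordinates on some open set, and (3) neither, in which case a generic-permutation argument shows that for \emph{every} index $n$ the set $\{(x,p): S_n(h(x))\ne h(T_p(x))\}$ is comeager, directly contradicting the homomorphism property (since $x\mathrel{G^+}T_p(x)$ always holds). Your argument essentially presupposes that Case 3 does not occur --- equivalently, that a single index works generically --- which is one of the genuinely hard parts of the proof. To rescue your approach you would need to either (a) handle the case where $n$ is only locally constant, which forces a case analysis that reproduces much of the paper's structure, or (b) show directly, as the paper's Case 3 does, that failure of generic constancy of $n$ already contradicts the homomorphism property. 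The edge construction via $\tau_\sigma\circ T^{(\omega)}$, the commutation $\tau_\sigma T^{(\omega)}=T^{(\omega)}\tau_\sigma$, and the orbit-density argument in Step 3 are all sound as far as they go; the approach collapses at the unjustified passage from nonmeager to comeager.
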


We use this proposition to prove the following theorem, which 
answers a question of Kechris and Marks~\cite[Section 3.(H)]{KM}:

\begin{theorem}\label{KM_answer}
  There is a Borel graph $G$ such that for any connected component $C$ of
  $G$, $G \restriction C$ has a Borel $\omega$-coloring. 
  However, there is no Borel graph homomorphism from
  $G$ to a locally countable Borel graph. 
\end{theorem}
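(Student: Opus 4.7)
The plan is to construct the required graph as $G = G_0^+$ for a $G_0$ meeting the hypotheses of Proposition~\ref{FS_no_homomorphism}, so that the ``no Borel homomorphism to a locally countable graph'' conclusion is automatic. I would take $X = 2^\omega$, let $T \colon X \to X$ be an aperiodic Borel homeomorphism (e.g.\ the odometer), and let $G_0$ be the graph with $x G_0 y$ iff $y = T^{\pm 1}(x)$. Then $G_0$ is locally countable, $T(x) G_0 x$ for every $x$, and the $G_0$-connectedness relation $\bigcup_{n \in \Z} \{(x, T^n x) : x \in X\}$ is a countable union of graphs of homeomorphisms of the perfect Polish space $X$; each such graph is nowhere dense in $X \times X$, so the connectedness relation is meager. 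Hence Proposition~\ref{FS_no_homomorphism} applies and $G := G_0^+$ admits no Borel homomorphism to any locally countable Borel graph.

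The crucial remaining step is the following adjacency description, which I would establish first: $x G_0^+ y$ if and only if there exist a bijection $\sigma \colon \omega \to \omega$ and $\epsilon \in \{-1, 1\}^\omega$ with $y(k) = T^{\epsilon_k}(x(\sigma(k)))$ for all $k$. The forward direction is the main computation: a witness $j$ for ``$\exists j\ (x(i) G_0 y(j))$'' is forced to be the unique index at which $y$ meets the $T$-orbit of $x(i)$ (using that the coordinates of $y$ lie in distinct $T$-orbits), and the symmetric quantifier in the definition of $G_0^+$ makes the resulting map $i \mapsto j$ a bijection of $\omega$. An immediate consequence is that adjacent vertices of $G$ enumerate the same countable collection of $T$-orbits, so each connected component $C$ of $G$ is contained in a set of the form $\{x \in X^+ : \pi(x) = S\}$ for some fixed countable collection $S$ of $T$-orbits, where $\pi(x)$ denotes the set of $T$-orbits met by $x$.

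Given this, coloring $G \restriction C$ is almost immediate. Pick any $x^\ast \in C$, set $a := x^\ast(0)$, and let $C_1$ be the $T$-orbit of $a$. Each $x \in C$ has a unique coordinate in $C_1$, say $x(k_x) = T^{m(x)}(a)$ for unique $m(x) \in \Z$; define $c(x) := m(x) \bmod 2$. This is Borel in $x$. If $x G y$ with $x, y \in C$, the adjacency description forces $\sigma(k_y) = k_x$ (because $T^{\epsilon_{k_y}}(x(\sigma(k_y))) = y(k_y) \in C_1$ forces $x(\sigma(k_y)) \in C_1$), so $m(y) = m(x) + \epsilon_{k_y} = m(x) \pm 1$, giving $c(y) \neq c(x)$. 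Thus $c$ is a Borel $2$-coloring, a fortiori a Borel $\omega$-coloring, of $G \restriction C$. The main obstacle in the whole argument is pinning down the adjacency characterization; once that is in hand, the coloring is essentially forced, since $G$-edges inside a component reduce to ``shift each coordinate's orbit-position by $\pm 1$, then permute indices.''
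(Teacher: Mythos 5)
Your proposal is correct and follows essentially the same route as the paper: take $G=(G_T)^+$ for the graph $G_T$ generated by a fixed-point free Borel homeomorphism $T$, invoke Proposition~\ref{FS_no_homomorphism} for the non-homomorphism conclusion, and color each component by locating each vertex's unique coordinate in a fixed $T$-orbit. The paper stops at a countable coloring (mapping $x$ to that coordinate) and leaves the adjacency analysis implicit, whereas you isolate the ``permute-then-shift-by-$\pm1$'' description of $G^{+}$-edges and use it to obtain a $2$-coloring, but the construction and key observation (that adjacent vertices in $(G_T)^+$ meet the same $T$-orbits) are the same.
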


The paper is organized as follows. In Section~\ref{FS_section}, we use a
Baire category argument to prove a lemma about the Friedman-Stanley jump
for Borel graphs, and then use this to prove Theorem~\ref{KM_answer}. In
Section~\ref{L_section}, we prove 
Theorem~\ref{Louveau_jump} about the Louveau jump, modulo Theorem~\ref{thm: non-separation}. Finally, in
Section~\ref{Fr_section}, we prove Theorem~\ref{thm: non-separation} concerning how
hard it is to separate the iterated Fr\'echet ideal and filter, using
effective descriptive set theory. 

\section{A Friedman-Stanley Jump for Borel Graphs}\label{FS_section}

We begin by considering the jump $G \mapsto G^+$ for Borel graphs defined
above. First, we show that Proposition~\ref{FS_no_homomorphism} implies
Theorem~\ref{KM_answer}.

\begin{proof}[Proof of Theorem~\ref{KM_answer}:]
Let $T \from X \to X$ be a fixed-point free Borel homeomorphism of a
perfect Polish space $X$, which induces the Borel graph $G_T$ on $X$ where
$x \mathrel{G_T} y$ if $T(x) = y$ or $T(y) = x$. Let $G =
(G_T)^+$. Now if $C$ is a connected component of $G$, then fixing
some $y \in C$, map each $x \in C$ to the unique $x(i)$ such that $x(i)$ is
in the same $G$-component as $y(0)$. This is a countable Borel coloring, since
connected components of $G$ are countable. However, there is no Borel
homomorphism from $G$ to a locally countable Borel graph $H$ by
Proposition~\ref{FS_no_homomorphism}. 
\end{proof}

We now use a Baire category argument to prove
Proposition~\ref{FS_no_homomorphism}.

\begin{proof}[Proof of Proposition~\ref{FS_no_homomorphism}:]
A basis for $X^\omega$ consists of the basic open sets $N_{U_0, \ldots,
U_{k-1}} = \{x \in X^\omega : \forall i < k (x\coord{i} \in U_i)\}$, where
$U_0, \ldots, U_{k-1} \subset X$ are open. Here we say $N_{U_0, \ldots,
U_{k-1}}$ \define{restricts coordinates less than $k$}. Let $H$ be a
locally countable Borel graph on a Polish space $Y$. By changing the
topology on $Y$, we may assume that $H$ is generated by countably many
Borel homeomorphisms $S_0, S_1, \ldots$ (see {\cite[Exercise
13.5]{MR1321597}}).

Suppose that $p \from \omega \to \omega$ is bijection. Then define the
function $T_p \from X^\omega \to X^\omega$ by 
\[T_p(x)\coord{n} = T(x\coord{p(n)})\]
for all $n \in \omega$. Note that since $T$ is a homeomorphism, $T_p$ is a
homeomorphism which sends basic
open sets to basic open sets, and $T_p(x) \mathrel{G^+} x$ for every $x \in X^+$.

Suppose $h$ is a Borel function from $X^+$ to $Y$. We will use a Baire
category argument to show that $h$ is not a homomorphism from $G^+$ to $H$. Note that $X^+$
is a comeager subset of $X^\omega$, since the connectedness relation of $G$
is meager. Fix a comeager set $C \subset X^+$ such that $h
\restriction C$ is continuous. Our proof breaks down into three cases.

\smallskip \noindent \textit{Case 1:}
Assume that $h \restriction C$ is constant on some open set $U$. 
We will
find $x,y \in U$ such that $x \mathrel{G^+} y$, but $h(x) = h(y)$.
We
may assume $U$ is basic open and 
restricts coordinates less than $k$. Let $p \from \omega \to \omega$ be
the permutation which swaps the interval $[0,k)$ with the interval
$[k,2k)$:
\[p(n) = \begin{cases}
n+k &\mbox{if $n< k$}\\
n-k &\mbox{if $k \le n <2k$}\\
n &\mbox{if $n >2k$}.
\end{cases}\]

Then $T_p^{-1}(U)$ is a basic open set which restricts
coordinates in $[k,2k)$, and so $V = T_p^{-1}(U) \inters U
\neq \emptyset$. We claim that comeagerly many $x \in V$ have $h(x) =
h(T_p(x))$. This is because $h \restriction C$ is constant on $U$, and
since $C \inters U$ is comeager in $U$ and $T_p$ is a homeomorphism, 
$T_p^{-1}(C \inters U)$ is comeager in $T_p^{-1}(U)$. 
Hence comeagerly many $x \in V$ have $x \in C$ and $T_p(x) \in C$, and so $h(x) = h(T_p(x))$.

\smallskip \noindent\textit{Case 2:} There is a basic open set $U = N_{U_0, \ldots,
U_{k-1}}$, such that the value of $h \restriction C$ on $x \in N_{U_0, \ldots, U_{k-1}}$ depends only on the first $k$ coordinates of $x$. That is, if we
take any two basic open neighborhoods
$N_{V_0, \ldots, V_n}, N_{V'_0, \ldots, V'_n} \subset N_{U_0, \ldots,
U_{k-1}}$
where $V_i = V'_i$ for $i < k$, then $h(C \inters N_{V_0, \ldots, V_n})
\inters h(C \inters N_{V'_0, \ldots, V'_n}) \neq \emptyset$. 
In this case, we will find  $x, y \in U$ such that $y = T_p(x)$ and so $x
\mathrel{G^+} y$, but $h(x) \mathrel{\cancel{G}} h(y)$. 

Let $p$ be the permutation above which swaps the interval $[0,k)$ with
$[k,2k)$. 
Let $V = U \inters T_p^{-1}(U)$ which is a nonempty open set. Let $A_m =
\{x \in X^+ :
S_m(h(x)) \neq h(T_p(x))\}$. It suffices to show that each $A_m$ is comeager in
$V$, since then the set of $x$ such that
$h(x) \mathrel{\cancel{G}} h(T_p(x))$ is comeager in $V$. Fix $m \in
\omega$. Given an arbitrary open subset $W \subset V$, it suffices to
show that $A_m$ is
nonmeager in $W$. Since $h \restriction C$ is continuous and not constant on any open
set and $S_m$ is a homeomorphism, we can find open sets $W_0, W_1 \subset W$ such
that $S_m(h(C \inters W_0))
\inters h(C \inters T_p(W_1)) = \emptyset$. By refining, we may
assume that $W_0$ and $W_1$ are basic open sets. 
But since $h \restriction C$ depends only on the
first $k$ coordinates, we may assume $W_0$ and $T_p(W_1)$
restrict only coordinates
less than $k$, and so $W_1$ only restricts coordinates in $[k,2k)$. Thus,
$W_0 \inters W_1$ is open and nonempty, and 
comeagerly many $x$ in $W_0 \inters W_1$ are in $A_m$. 

\smallskip \noindent \textit{Case 3:} Assume Case 1 and Case 2 do not
occur. In this case, instead of specifying a particular permutation $p\colon
\omega \rightarrow \omega$ and considering $h$ applied to a generic $x$ and
$T_p(x)$, we will consider a generic permutation $p$.
More
precisely, let $Y \subset \baire$ be the closed set of injective functions
from $\omega \to \omega$. A basis for $Y$ consists of the sets $N_q = \{p
\in Y : p \supset q\}$ for some finite partial injection $q$. Now the set
of $p \in Y$ such that $p$ is a bijection from $\omega$ to $\omega$ is
dense $G_\delta$. To complete the proof, we will show that  comeagerly many
$(x,p) \in X^+ \times Y$ have the property that $h(x) \mathrel{\cancel{G}}
h(T_p(x))$.

For each $n$, let $B_n = \{(x,p) \in X^+ \times Y : S_n(h(x)) \neq
h(T_p(x))\}$. It suffices to show that $B_n$ is nonmeager in every open set $U \times
V \subset X^+ \times Y$. By refining, we may assume there is some $k$ so that $U$ is a basic open set which
restricts coordinates less than $k$, and $V$ is a basic open set of the form $V
= N_q$ where $q$ is a 
finite partial injection with $\dom(q) = \ran(q) = k$.

Because $U$ only restricts coordinates less than $k$, and $\dom(q) =
\ran(q) = k$ we have that for all $p , p' \in N_q$, $T_p(U) = T_{p'}(U)$.
Hence we can define $T_q(U)$ to be this common value (though $q$
is only a finite partial function). This can be done in any situation where the range and domain of $q$ include all coordinates restricted by a basic open set $U$.

Let $W=T_q(U)$. 
We claim that there are basic open sets $U' \subset U$ and $W'\subset W$,
$q' \supset q$ and $k'$ such 
that:
\begin{enumerate}
\item  $\dom(q')=\ran(q') \ge k'$.
\item $U'$ and $W'$ only restrict coordinates less than $k'$.
\item $T_{q'}(U') \cap W' \ne \emptyset$.
\item $S_n(h(U' \cap C)) \cap h(W' \cap C) = \emptyset$.
\end{enumerate}

We establish this claim as follows. Since we are not in Case 1 or Case 2, we can
find basic open $U_1 = N_{V_0, \ldots, V_{j}}$ and $U_2 = N_{V'_0, \ldots,
V'_{j}}$ with $U_1, U_2 \subset U$ such that $V_i = V'_i$ for all $i <k$, and $h(U_1 \inters
C) \inters h(U_2 \inters C) = \emptyset$. Hence, $S_n(h(U_1 \inters C))
\inters S_n(h(U_2 \inters C)) = \emptyset$ since $S_n$ is a homeomorphism.
Let $U^* = N_{V_0, \ldots, V_{k-1}}$.  We can find a basic open $W' \subset T_q(U^*)$   such that $h(W' \cap C)$ is disjoint from $S_n(h(U_i \cap C))$ for some $i \in \{0,1\}$. Let $U'=U_i$ and so part (iv) of the claim holds.  As  $U^* \subset U$, this definition of $W'$ ensures that $W' \subset W$.
Let $k'$ be such that $W'$ and $U'$ do not restrict coordinates less than $k'$. 

Let $c = k'-k$. Define $q'$ as follows
\[q'(i) = \begin{cases} q(i) &\mbox{if } i <k\\
i+c &\mbox{if } k \le i <k+c \\
i-c &\mbox{if } k+c \le i <k+2c.
\end{cases}\]
It remains to show part (iii) of the claim. If  $T_{q'}(U') \cap W' = \emptyset$ then these sets must place incompatible requirements on some coordinate. By construction of $q'$, this coordinate must be less than $k$ (as $W'$ restricts coordinates in $[0, \ldots, k +c)$ and $T_{q'}(U')$ restricts coordinates in 
$[0, \ldots, k) \cup [k+c, \ldots, k+2c)$). However this is not the case as $W' \subset T_q(U^*)$ and $T_q(U^*)$ places the same restrictions on coordinates less than $k$ as does $T_q(U')$.

Now using the claim we will complete the proof. Let $r$ be the inverse of $q'$. Take any $p \in N_{q'}$. 
Take any $x \in  U' \cap T_{r}(W')$. If $x$ is also in the comeager set $  C \cap T_p^{-1}(C)$, then $T_p(x) \in W' \cap C$ and $x \in U' \cap C$. Hence $S_n(h(x)) \ne h(T_p(x))$. Thus for comeagerly many $x \in U' \cap T_{r}(W')$ we have that $(x, p) \in B_n$. Thus $B_n \cap ((U' \cap T_{r}(W')) \times N_{q'})$ is comeager and so $B_n$ is nonmeager in $U \times V$.

\end{proof}

\section{The Louveau jump for Borel graphs} 
\label{L_section}

In this section, we consider the Louveau jump $G \mapsto G^\lv$ defined in
the introduction, and we prove Theorem~\ref{Louveau_jump}, that if $G$ is a
bipartite Borel graph, there is no
Borel homomorphism from $G^\lv$ to $G$. Note that among all graphs, there is
not always a homomorphism from $G^\lv$ to $G$ (consider the $G$ that is the
disjoint union of the complete graphs $K_n$ for every $n$). However, if $G$
is bipartite, then using the axiom of choice we can always find a
homomorphism from $G^\lv$ to $G$. This is because if $G^\lv$ has a cycle of
length $n$, then $G$ clearly has a cycle of length $n$. Hence, if $G$ is
bipartite, then $G^\lv$ is also bipartite, so using choice we can find a
homomorphism of $G^\lv$ into any two vertices of $G$ connected by an edge. 

Like Louveau's argument in \cite{MR1327979}, our proof will involve
potential complexity in a key way. Recall that if $\Gamma$ is a class of
subsets of Polish spaces,
$X$ is a Polish space with topology $\tau$, and $A \subset X
\times X$, then $A$ is said to be \define{potentially $\Gamma$} if there is
a Polish topology $\tau'$ on $X$ inducing the same Borel $\sigma$-algebra
as $\tau$ such that $A$ is $\Gamma$ as a subset of $X \times X$ with topology
$\tau' \times \tau'$. 
Louveau's proof relies on the fact that if $E$ and $F$ are equivalence
relations on Polish spaces $X$ and $Y$ and $\Gamma$ is a class of sets
closed under continuous preimages, then if $E \leq_B F$ and $F$ is
potentially $\Gamma$, then so is $E$. However, the analogous fact fails
for Borel graphs under Borel homomorphism: there are Borel graphs of
arbitrarily high potential complexity which admit Borel homomorphisms to
Borel graphs of low potential complexity (e.g. to $K_2$). So instead, we will
consider the complexity of separating pairs of points of distance one and
distance two.

\begin{definition}
  If $G$ is a graph on $X$, let $D_{G,n}$ be the set of pairs $(x,y) \in X
  \times X$ such that there is a path from $x$ to $y$ of length $n$ in $G$. 
\end{definition}

For example, if $G$ is an analytic triangle-free graph on a Polish space
$X$, then $D_{G,1}$ and $D_{G,2}$ are disjoint analytic sets which can
therefore be separated by a Borel set. 

\begin{proposition}\label{potential_complexity}
  Suppose $G$ and $H$ are triangle-free graphs on the
  Polish spaces $X$ and $Y$, and there is a Borel homomorphism from $G$ to
  $H$. Then if $D_{H,1}$ and $D_{H,2}$ can be separated by a potentially
  $\bfSigma^0_\alpha$ (resp. $\bfPi^0_\alpha)$ set, then $D_{G,1}$ and
  $D_{G,2}$ can also be separated by a potentially $\bfSigma^0_\alpha$
  (resp. $\bfPi^0_\alpha$) set.
\end{proposition}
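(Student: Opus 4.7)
The plan is to pull back the separating set along the homomorphism. Given a Borel homomorphism $h \colon X \to Y$ from $G$ to $H$ and a separator $S \subseteq Y \times Y$ for the pair $(D_{H,1}, D_{H,2})$ of potential complexity $\bfSigma^0_\alpha$ (or $\bfPi^0_\alpha$), the candidate separator for $(D_{G,1}, D_{G,2})$ is simply $S' = (h \times h)^{-1}(S)$.

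First I would check that $S'$ set-theoretically separates $D_{G,1}$ from $D_{G,2}$. If $x \mathrel{G} y$, then by the homomorphism property $h(x) \mathrel{H} h(y)$, so $(h(x), h(y)) \in D_{H,1} \subseteq S$, giving $(x,y) \in S'$. Conversely, if $(x,y) \in D_{G,2}$ is witnessed by a walk $x \mathrel{G} z \mathrel{G} y$, then $h(x) \mathrel{H} h(z) \mathrel{H} h(y)$ is a walk of length $2$ in $H$, so $(h(x), h(y)) \in D_{H,2}$; since $S \cap D_{H,2} = \emptyset$, we get $(x,y) \notin S'$. The triangle-free hypothesis on both $G$ and $H$ is what ensures $D_{G,1} \cap D_{G,2} = \emptyset$ and $D_{H,1} \cap D_{H,2} = \emptyset$, so that talking about separation is meaningful on either side.

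Next I would verify that the potential complexity transfers, which is where the topology-refinement machinery enters. By hypothesis there is a finer Polish topology $\tau'$ on $Y$, inducing the same Borel $\sigma$-algebra, such that $S$ is $\bfSigma^0_\alpha$ (resp.\ $\bfPi^0_\alpha$) in the product topology $\tau' \times \tau'$. Because the Borel sets of $(Y, \tau')$ agree with those of the original topology, the function $h$ is still Borel as a map into $(Y, \tau')$. Applying the standard refinement lemma that any Borel function between Polish spaces becomes continuous after passing to a sufficiently fine Polish topology on the domain with the same Borel sets (e.g., Kechris, \emph{Classical Descriptive Set Theory}, Theorem~13.11), I obtain a Polish topology $\sigma$ on $X$, finer than the original and with the same Borel $\sigma$-algebra, such that $h \colon (X, \sigma) \to (Y, \tau')$ is continuous. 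Then $h \times h \colon (X, \sigma)^2 \to (Y, \tau')^2$ is continuous, and continuous preimages stay inside $\bfSigma^0_\alpha$ (resp.\ $\bfPi^0_\alpha$). Hence $S' = (h \times h)^{-1}(S)$ is $\bfSigma^0_\alpha$ (resp.\ $\bfPi^0_\alpha$) in $(X, \sigma)^2$, i.e., potentially of the required class.

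I do not anticipate any serious obstacle here; the whole argument is essentially a two-step observation once one has the refinement lemma and the definition of potential complexity in hand. The conceptual point worth emphasizing is precisely the distinction flagged in the paragraph preceding the proposition: under Borel homomorphism one cannot in general transfer the potential complexity of a Borel graph itself (since homomorphisms collapse too much), but one \emph{can} transfer the complexity needed to separate the distance-$1$ and distance-$2$ relations. This is the invariant that will do the work in the proof of Theorem~\ref{Louveau_jump}.
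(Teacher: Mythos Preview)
Your proposal is correct and follows the same approach as the paper: pull back the separator along $h\times h$, and refine topologies so that $h$ becomes continuous, whence closure of $\bfSigma^0_\alpha$/$\bfPi^0_\alpha$ under continuous preimages gives the result. You are in fact slightly more explicit than the paper in spelling out that one first passes to the refined topology $\tau'$ on $Y$ witnessing the potential complexity of $S$ and only then refines on $X$ to make $h$ continuous into $(Y,\tau')$.
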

\begin{proof}
  Let $h$ be a Borel homomorphism from $G$ to $H$. By changing topology,
  we may assume that $h$ is continuous (see \cite[Theorem 3.11]{MR1321597}).
  Since $h$ is a homomorphism, if there is a path from $x$ to $y$ in $G$ of
  length $n$, then there is a path from $h(x)$ to $h(y)$ in $H$ of length
  $n$. Hence, the preimage of a set separating $D_{H,1}$ and $D_{H,2}$
  under the function $(x,y) \mapsto (h(x),h(y))$ will separate $D_{G,1}$
  and $D_{G,2}$. The result follows since the classes $\bfSigma^0_\alpha$
  and $\bfPi^0_\alpha$ are closed under taking continuous preimages. 
\end{proof}

Next, we will define a transfinite way to iterate the operation $G
\mapsto G^\lv$. In what follows, for each countable limit ordinal $\lambda$,
let $\pi_\lambda:\omega \to \alpha$ be an increasing and cofinal
function. For each successor ordinal $\alpha+1$, let
$\pi_{\alpha+1}:\omega \to \alpha +1$ be the constantly $\alpha$
function. These functions will allow us to simultaneously handle limit and
successor cases in a uniform way.

Suppose $(G_i)_{i \in \omega}$ is a countable sequence of graphs. Define
$(G_i)^\lv$ to be the graph whose vertices are elements of $\prod_i G_i$
and such that  $x (G_i)^\lv y$ if there exists an $n$ such that for all $m
> n$ we have $x\coord{m} G_m y\coord{m}$. Now we define $G^\alpha$ for
countable ordinals $\alpha$ as follows.
\begin{align*}
&G^0 = G \\
&G^{\alpha} = (G^{\pi_\alpha(n)})^* \quad \mbox{if }\alpha>0
\end{align*}
Note that under this definition, $G^1=G^\lv$.

\begin{lemma} \label{lem: alpha homomorphism}
 Let $G$ and $H$ be graphs. If there is a Borel homomorphism from $G$ to
 $H$, then for all countable ordinals $\alpha$, there is a Borel
 homomorphism from $G^\alpha$ to~$H^\alpha$. \qed
\end{lemma}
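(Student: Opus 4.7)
The plan is to proceed by transfinite recursion on $\alpha < \omega_1$, simultaneously constructing from the fixed Borel homomorphism $h \from G \to H$ a coherent family of Borel homomorphisms $h_\alpha \from G^\alpha \to H^\alpha$. At the base I would set $h_0 = h$. For $\alpha > 0$, the definition $G^\alpha = (G^{\pi_\alpha(n)})^\lv$ tells me that a vertex of $G^\alpha$ is a sequence $x = (x\coord{n})_n$ with $x\coord{n}$ a vertex of $G^{\pi_\alpha(n)}$, and analogously for $H^\alpha$. Since $\pi_\alpha(n) < \alpha$ for every $n$, the homomorphisms $h_{\pi_\alpha(n)} \from G^{\pi_\alpha(n)} \to H^{\pi_\alpha(n)}$ are already available by induction, and I would define $h_\alpha$ coordinate-wise by
\[h_\alpha(x)\coord{n} = h_{\pi_\alpha(n)}(x\coord{n}).\]

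Next I would verify the two required properties. Borelness propagates automatically: each coordinate of $h_\alpha$ is Borel by the inductive hypothesis, and a map into a countable product of Polish spaces is Borel iff every coordinate is. For the homomorphism property, suppose $x \mathrel{G^\alpha} y$; unpacking the definition of $(\cdot)^\lv$, there is some $n_0$ with $x\coord{m} \mathrel{G^{\pi_\alpha(m)}} y\coord{m}$ for all $m > n_0$. Applying the inductive hypothesis to each $h_{\pi_\alpha(m)}$ yields $h_\alpha(x)\coord{m} \mathrel{H^{\pi_\alpha(m)}} h_\alpha(y)\coord{m}$ for all $m > n_0$, which is precisely what is required for $h_\alpha(x) \mathrel{H^\alpha} h_\alpha(y)$.

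I do not anticipate a genuine obstacle: the definition of $G^\alpha$ has been engineered so that the jump acts coordinate-wise, and both ``Borel'' and ``homomorphism from $G^\beta$ to $H^\beta$'' are properties preserved under countable coordinate-wise assembly. The only delicate bookkeeping is the coherence of the recursion, and this is handled by fixing $h$ once at the outset and building the entire sequence $(h_\alpha)_{\alpha < \omega_1}$ by a single transfinite recursion, so that the map used at coordinate $n$ of $h_\alpha$ really is the previously defined $h_{\pi_\alpha(n)}$. Because the argument never leaves the inductive framework, no limit-stage subtleties arise beyond what is already encoded in the cofinal functions $\pi_\lambda$.
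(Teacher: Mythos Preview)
Your proposal is correct and is exactly the straightforward transfinite induction the paper has in mind; the paper in fact omits the proof entirely (the lemma is stated with a trailing \qed), so your write-up simply spells out the obvious coordinate-wise construction.
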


\begin{lemma}
Let $(G_i)_{i \in \omega}$ be a countable sequence of graphs. Assume for
each $i \in \omega$ that $h_i:G_i \to G$ is a Borel homomorphism.
Then there is a Borel homomorphism from $(G_i)^\lv$ to $G^\lv$. 
\end{lemma}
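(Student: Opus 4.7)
The plan is to define the homomorphism coordinatewise. Let $H \from \prod_i V(G_i) \to V(G)^\omega$ be given by
\[
H(x)(i) = h_i(x(i)).
\]
First I would observe that $H$ is Borel because each coordinate function $x \mapsto h_i(x(i))$ is Borel (as $h_i$ is Borel) and a function into a countable product is Borel iff each of its coordinate projections is.

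Next I would verify the homomorphism property directly from the definitions of $(G_i)^\lv$ and $G^\lv$. Suppose $x \mathrel{(G_i)^\lv} y$; by definition there exists $n$ such that for every $m > n$ we have $x(m) \mathrel{G_m} y(m)$. Since each $h_m$ is a homomorphism from $G_m$ to $G$, this implies $h_m(x(m)) \mathrel{G} h_m(y(m))$ for every $m > n$, i.e.\ $H(x)(m) \mathrel{G} H(y)(m)$ for all $m > n$. This is exactly the condition for $H(x) \mathrel{G^\lv} H(y)$.

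There is essentially no obstacle here; the only thing to check is that the witness $n$ for $x \mathrel{(G_i)^\lv} y$ serves equally well as a witness for $H(x) \mathrel{G^\lv} H(y)$, which is immediate because the homomorphism property of each $h_i$ preserves adjacency coordinatewise. So the proof reduces to the two observations above.
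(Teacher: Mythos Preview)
Your proof is correct and essentially identical to the paper's own argument: both define the map coordinatewise by $H(x)(i)=h_i(x(i))$ and verify directly that a witness $n$ for $x\mathrel{(G_i)^{\lv}}y$ also witnesses $H(x)\mathrel{G^{\lv}}H(y)$. The only minor addition is your explicit remark on why $H$ is Borel, which the paper leaves implicit.
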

\begin{proof}
Let $X$ be the vertex set of $G$ and $X_i$ be the vertex set of $G_i$. 
Then define the Borel homomorphism
$g: \prod_i X_i \to \prod_i X$ from $(G_i)^\lv$ to $G^\lv$ by 
 $g(x)\coord{n}= h_n(x\coord{n})$ i.e.\ apply to the $n$\textsuperscript{th} vertex in the sequence the mapping $h_n$.
Now assume that $x \mathrel{(G_i)^\lv} y$. Then for some $n$ for all $m >n$ we have
that $x\coord{m} \mathrel{G_m} y \coord{m}$. Hence 
$h_m(x\coord{m}) \mathrel{G} h_m(y\coord{m})$ and so $h(x) \mathrel{G^\lv} h(y)$.
\end{proof}

\begin{lemma}
\label{lem: alpha to G}
If there is a Borel homomorphism from $G^\lv$ into $G$, then for all countable ordinals $\alpha$, there is a Borel homomorphism from $G^\alpha$ into $G$.
\end{lemma}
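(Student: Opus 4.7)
The plan is to proceed by transfinite induction on the countable ordinal $\alpha$. The base case $\alpha=0$ is immediate, since $G^0 = G$ and the identity is a Borel homomorphism. The case $\alpha = 1$ is exactly the hypothesis, as $G^1 = G^\lv$. Fix $\alpha > 1$ and assume that for every $\beta < \alpha$ there is a Borel homomorphism from $G^\beta$ into $G$; the goal is to construct one from $G^\alpha$ into $G$.

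The key observation is that the single formula $G^\alpha = (G^{\pi_\alpha(n)})^\lv$ handles both the successor and limit cases uniformly, thanks to the way the auxiliary functions $\pi_\alpha$ were chosen: in the successor case $\alpha = \beta+1$, the function $\pi_{\beta+1}$ is constantly $\beta$, so $G^\alpha = (G^\beta)^\lv$; in the limit case, $\pi_\alpha \from \omega \to \alpha$ is cofinal, so every index $\pi_\alpha(n)$ is strictly below $\alpha$. In both cases the inductive hypothesis supplies Borel homomorphisms $h_n \from G^{\pi_\alpha(n)} \to G$ for every $n \in \omega$. I then feed the sequence $(G^{\pi_\alpha(n)})_{n \in \omega}$ together with the family $(h_n)_{n \in \omega}$ into the preceding lemma (on $\lv$-products of countable sequences of graphs) to obtain a Borel homomorphism from $(G^{\pi_\alpha(n)})^\lv = G^\alpha$ into $G^\lv$. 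Composing this with the hypothesized Borel homomorphism from $G^\lv$ into $G$ produces a Borel homomorphism $G^\alpha \to G$, closing the induction.

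There is no genuine obstacle here: the statement is essentially engineered so that, once the inductive framework and the definitional identity above are in place, the two preceding lemmas combine to finish the induction almost mechanically. The only points that warrant any care are the routine verifications that Borel homomorphisms compose to yield Borel homomorphisms, and that the coordinate-wise lemma applies uniformly at every stage of the recursion, both of which are immediate from the definitions.
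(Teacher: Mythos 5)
Your proof is correct and follows essentially the same route as the paper: transfinite induction, invoking the coordinate-wise lemma on the sequence $(G^{\pi_\alpha(n)})_n$ to land in $G^\lv$, then composing with the hypothesized homomorphism $G^\lv \to G$. The paper's version is terser (it leaves the base cases and the uniform handling of successor/limit implicit), but the substance is identical.
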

\begin{proof}
If for all $\beta < \alpha$ there is a Borel homomorphism from $G^\beta$
to $G$, then by the previous lemma, there is a Borel homomorphism from
$G^\alpha$ to $G^\lv$. Now this homomorphism can be composed with the
homomorphism from $G^\lv$ to $G$ to obtain a Borel homomorphism from $G^\alpha$ to $G$.
\end{proof}

As usual, let us denote by $K_2$ the complete graph on two vertices. The graphs $K^\alpha_2$ for countable ordinals $\alpha$ will play a key part in the main theorem of this section. We will investigate these graphs further. First we will define the spaces on which these graphs live.  Inductively define $X^\alpha$ as follows.
\begin{align*}
&X^0 = 2 \\
& X^{\alpha} = \prod_n X^{\pi_\alpha(n)} \quad\mbox{if }\alpha >0
\end{align*}
It should be clear from the definition of $G^\alpha$ that $K^\alpha_2$ is a graph on $X^\alpha$. If $\alpha > 0$, then the space $X^\alpha$ is clearly homeomorphic to~$2^\omega$. We want to define a homeomorphism from $X^\alpha$ to $2^\omega$ that respects the manner in which $X^\alpha$ has been inductively constructed.  
We define $\gamma^\alpha:X^\alpha \to 2^\omega$ inductively as follows. 
\begin{enumerate}
\item  $\gamma^1$ is the identity map.
\item  For $\alpha >1$,  we define  $\gamma^{\alpha }(x)$ to be the
sequence $z$ such that for all $m$ and $n$, $z\coord{\seq{m,n}} =
(\gamma^{\pi_\alpha(m)}(x\coord{m}))(n)$.
\end{enumerate}

The edge relation on $K^\alpha_2$ can be though of in terms of the iterated
Fr\'{e}chet filter. The $\alpha$ iterate of the Fr\'{e}chet filter is a subset of $X^\alpha$. It  will be denoted by $\FF^\alpha$ and it is defined inductively as follows.  
 \begin{align*}
&\FF^0 = \{1\} \\
&\FF^{\alpha} = \{x \in X^{\alpha} \colon \exists n\, \forall m >n (x
\coord{m} \in \FF^{\pi_\alpha(m)})\}\quad \mbox{if $\alpha >0$.}
\end{align*}
Note that $\FF^1$ is the standard Fr\'{e}chet, or cofinite, filter. Similarly we can define the iterated Fr\'{e}chet ideals. 
 \begin{align*}
&\II^0 = \{0\} \\
&\II^{\alpha} = \{x \in X^{\alpha} \colon \exists n\, \forall m >n (x(m) \in \II^{\pi_\alpha(m)})\}\quad \mbox{if $\alpha >0$.}
\end{align*}

If $x, y$ are elements of $2^\omega$ then we denote by $x \symdiff y$ the
binary sequence $z$ where $z(n) = 0$ if $x(n)=y(n)$ and $z(n) =1$
otherwise. The notation $\symdiff$ is used because  $x \symdiff y$ is the
symmetric difference of $x$ and $y$ when $x$ and $y$ are considered as
subsets of $\omega$ under the standard bijection between $2^\omega$
and~$\Pset(\omega)$. 

We can use the fact that $X^\alpha$ is homeomorphic to $2^\omega$ in order
to induce a symmetric difference operation on $X^\alpha$ when $\alpha >0$.
Define the operation $\symdiff^\alpha$ on $X^\alpha$ by $x\symdiff^\alpha
y= z$ if and only if $\gamma^\alpha(x)\symdiff \gamma^\alpha(y) =
\gamma^\alpha(z)$. For the case $\alpha =0$, for $x,y \in X^0$, define $x
\symdiff^0 y = 0$ if $x=y$ and $x \symdiff y = 1$ if $x\ne y$.

Since the homeomorphism $\gamma^\alpha$ is defined simply by permuting
indices from a
product of infinitely many copies of $2$ to yield $\cantor$, we have that
symmetric difference operation commutes with $\gamma^\alpha$. 

We will make use of the following properties of $\symdiff^\alpha$. 
\begin{enumerate}
 \item The operation $\symdiff^\alpha$ is  associative and commutative.
 \item $x\symdiff^\alpha x \symdiff^\alpha y = y$.
\item For all  $\alpha>0$,   $(x \symdiff^{\alpha} y)(n) = x(n) \symdiff^{\pi_\alpha(n)} y(n)$.   
 \item For all $\alpha$, if $x \in \II^\alpha$, then $x \symdiff^\alpha y
 \in \FF^\alpha$
 if and only if $y \in \FF^\alpha$.
\end{enumerate}

The following lemma explains how to define the edge relation in $K^\alpha_2$ using $\FF^\alpha$. 

\begin{lemma}
 For all $\alpha$ and $x, y \in X^\alpha$, 
 $x K^\alpha_2 y$ if and only if $x\symdiff^\alpha y \in \FF^\alpha$.
 \end{lemma}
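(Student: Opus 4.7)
The plan is to prove the lemma by transfinite induction on $\alpha$, treating the base case directly and handling the successor and limit cases uniformly via the sequence-valued $\lv$ operation applied to $(K_2^{\pi_\alpha(n)})_{n\in\omega}$.

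For the base case $\alpha=0$, the space $X^0 = 2$ carries the complete graph $K_2$, so $x\mathrel{K_2^0} y$ iff $x\ne y$. Directly from the definitions, $x\symdiff^0 y = 1$ iff $x\ne y$, while $\FF^0 = \{1\}$, so the two conditions are literally the same.

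For the inductive step, suppose $\alpha>0$ and the lemma holds for every $\beta<\alpha$. By the inductive definition of $G^\alpha$ we have $K_2^\alpha = (K_2^{\pi_\alpha(n)})^\lv$, where this denotes the $\lv$-operation applied to the sequence $(K_2^{\pi_\alpha(n)})_{n\in\omega}$. Unfolding that definition gives
\[
x \mathrel{K_2^\alpha} y \iff \exists n\,\forall m>n\;\bigl(x\coord{m} \mathrel{K_2^{\pi_\alpha(m)}} y\coord{m}\bigr).
\]
Since $\pi_\alpha(m)<\alpha$ for every $m$ (by choice of $\pi_\alpha$ as either constantly $\beta$ in the successor case $\alpha=\beta+1$ or increasing and cofinal in the limit case), the inductive hypothesis applies coordinatewise and converts the right-hand side into
\[
\exists n\,\forall m>n\;\bigl(x\coord{m} \symdiff^{\pi_\alpha(m)} y\coord{m} \in \FF^{\pi_\alpha(m)}\bigr).
\]

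Now I invoke property (iii) of $\symdiff^\alpha$, namely $(x\symdiff^\alpha y)\coord{m} = x\coord{m}\symdiff^{\pi_\alpha(m)} y\coord{m}$, to rewrite this as
\[
\exists n\,\forall m>n\;\bigl((x\symdiff^\alpha y)\coord{m}\in \FF^{\pi_\alpha(m)}\bigr),
\]
which is exactly the inductive definition of $x\symdiff^\alpha y \in \FF^\alpha$. This closes the induction. There is no genuine obstacle here; the proof is a routine bookkeeping argument whose only substantive ingredient is property (iii), which is why that property was isolated in the list preceding the lemma.
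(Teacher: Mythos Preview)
Your proof is correct and follows essentially the same approach as the paper: transfinite induction on $\alpha$, unfolding the definition of $K_2^\alpha$ as $(K_2^{\pi_\alpha(n)})^\lv$, applying the induction hypothesis coordinatewise, and then invoking property~(iii) of $\symdiff^\alpha$ to rewrite the result as the defining condition for membership in $\FF^\alpha$. The paper's version is terser (it presents the chain of equivalences as a single aligned display and calls the base case trivial), but the logical content is identical.
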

\begin{proof}
 This result holds trivially for the case $\alpha =0$. Take $\alpha >0$ and assume the result holds for all smaller ordinals. Consider $x, y \in X^{\alpha}$. We have that 
\begin{align*}
 x K^{\alpha}_2 y & \Leftrightarrow \exists n\, \forall m >n (x(m) K^{\pi_\alpha(m)}_2 y(m))\\
 & \Leftrightarrow \exists n \,\forall m >n (x(m)\symdiff^{\pi_\alpha(m)} y(m)) \in \FF^{\pi_\alpha(m)}\\
 & \Leftrightarrow \exists n \,\forall m >n (x \symdiff^\alpha y)(m) \in \FF^{\pi_\alpha(m)}\\
& \Leftrightarrow  x \symdiff^\alpha y \in  \FF^{\alpha}.\qedhere
\end{align*}
\end{proof}

Similarly, we have the following:

\begin{lemma}
\label{lem path length 2}
 For all $\alpha$ and $x, y \in X^\alpha$, 
 $x\symdiff^\alpha y \in \II^\alpha$ if and only if there is a path of length $2$ from $x$ to $y$ in $K^\alpha_2$.
 \end{lemma}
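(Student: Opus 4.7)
The plan is a straightforward induction on $\alpha$, parallel to the proof of the preceding lemma, exploiting the coordinatewise description of $\symdiff^\alpha$ given by property~(iii). The upshot of that property, together with the definitions of $\II^\alpha$ and of $\FF^\alpha$, is that both sides of the desired equivalence unpack to a tail condition on coordinates: $x\symdiff^\alpha y\in \II^\alpha$ says that for some $n$ and all $m>n$, $x(m)\symdiff^{\pi_\alpha(m)} y(m)\in \II^{\pi_\alpha(m)}$, while a length-two path $x - z - y$ in $K^\alpha_2$ says, via the preceding lemma, that for some $n$ and all $m>n$, $x(m)\symdiff^{\pi_\alpha(m)} z(m)$ and $z(m)\symdiff^{\pi_\alpha(m)} y(m)$ both lie in $\FF^{\pi_\alpha(m)}$.

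For the base case $\alpha=0$, I would verify both sides directly: a length-two path $x - z - y$ in $K^0_2 = K_2$ on $\{0,1\}$ requires the middle vertex $z$ to differ from both endpoints, which is possible iff $x=y$; and by definition $x\symdiff^0 y \in \II^0 = \{0\}$ iff $x=y$.

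For the inductive step $\alpha>0$, assuming the result for every $\beta<\alpha$, I would run both directions through the unpacking above. For $(\Rightarrow)$, from $x\symdiff^\alpha y\in \II^\alpha$ I obtain an $n$ with $x(m)\symdiff^{\pi_\alpha(m)} y(m)\in \II^{\pi_\alpha(m)}$ for every $m>n$; the induction hypothesis furnishes, for each such $m$, a vertex $z(m)\in X^{\pi_\alpha(m)}$ realizing a length-two path from $x(m)$ to $y(m)$ in $K^{\pi_\alpha(m)}_2$. Defining $z(m)$ arbitrarily for $m\le n$ (the spaces $X^{\pi_\alpha(m)}$ are nonempty by a trivial induction), the preceding lemma immediately gives $x\,K^\alpha_2\,z$ and $z\,K^\alpha_2\,y$. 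For $(\Leftarrow)$, given such a $z$, the preceding lemma gives $x\symdiff^\alpha z, z\symdiff^\alpha y\in \FF^\alpha$, so on some cofinite tail of $m$ we have $x(m)\,K^{\pi_\alpha(m)}_2\,z(m)$ and $z(m)\,K^{\pi_\alpha(m)}_2\,y(m)$; the induction hypothesis then yields $x(m)\symdiff^{\pi_\alpha(m)} y(m)\in \II^{\pi_\alpha(m)}$ on the same tail, and hence $x\symdiff^\alpha y\in \II^\alpha$.

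I do not anticipate any substantive obstacle: every ingredient (the coordinatewise identity in property~(iii), the edge-relation lemma, and the definitions of $\II^\alpha$ and $\FF^\alpha$) is already in place, and the argument is a faithful mirror of the coordinatewise induction used for the edge-relation lemma, simply replacing $\FF$ by $\II$ on one side and inserting a middle vertex on the other.
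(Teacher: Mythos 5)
Your proof is correct, but it follows a different route from the paper's. You run an induction on $\alpha$, unpacking both sides coordinatewise via property~(iii) and the definitions of $\II^{\alpha}$, $\FF^{\alpha}$, and stitching the induction hypothesis back together with the edge-relation lemma. The paper instead gives a short, non-inductive algebraic argument: given \emph{any} $z$ adjacent to $x$, set $w = x \symdiff^{\alpha} z \in \FF^{\alpha}$; then $z \symdiff^{\alpha} y = w \symdiff^{\alpha} (x \symdiff^{\alpha} y)$, and since $w \in \FF^{\alpha}$ this lies in $\FF^{\alpha}$ if and only if $x \symdiff^{\alpha} y \in \II^{\alpha}$, by the black-boxed algebraic properties of $\symdiff^{\alpha}$ (associativity, $x \symdiff^{\alpha} x \symdiff^{\alpha} y = y$, and property~(iv) and its dual). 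In effect the paper pushes the inductive content into the stated properties of $\symdiff^{\alpha}$ and the preceding edge-relation lemma, leaving this proof as pure algebra; your version re-does that induction inline, which is more verbose but has the virtue of not leaning on property~(iv), whose own proof would require an induction. Both are sound; the paper's is the more economical once the $\symdiff^{\alpha}$ calculus is in place.
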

\begin{proof} Suppose $x K_2^\alpha z$. By the above lemma, this is true if and only if $x \symdiff^\alpha z \in
\FF^\alpha$. Letting $w = x \symdiff^\alpha z$, we have $z K_2^\alpha y$ if and
only if $z \symdiff^\alpha y \in \FF^\alpha$. Finally, note that $z
\symdiff^\alpha y = (w \symdiff^\alpha x) \symdiff^\alpha y = w
\symdiff^\alpha (x \symdiff^\alpha y)$ which is in $\FF^\alpha$ if and only
if $x \symdiff^\alpha y \in \II^\alpha$, since $w \in \FF^\alpha$.
\end{proof}

\begin{lemma}
\label{lem: generic pairold}
Let $C \subset \cantor$ be a comeager set. Then there is a continuous map $f:2^\omega \rightarrow 2^\omega \times 2^\omega$ such that 
for all~$x$:
\begin{enumerate}
\item $f_0(x) \in C$.
\item $f_1(x) \in C$.
\item $f_0(x) \symdiff f_1(x) =x$.
\end{enumerate}
\end{lemma}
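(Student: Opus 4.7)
The plan is to construct $f_0$ and $f_1$ simultaneously by a joint Cantor scheme. Write $C \supset \bigcap_n W_n$, where $(W_n)_{n \in \omega}$ is a decreasing sequence of dense open subsets of $\cantor$. I will build, by induction on $n$, finite binary strings $\sigma_s, \tau_s \in \strs$ for each $s \in \strs$, subject to the following conditions:
\begin{enumerate}
\item[(a)] $|\sigma_s| = |\tau_s| = \ell(|s|)$, where $\ell \colon \omega \to \omega$ is nondecreasing with $\ell(n) \leq n$ and $\ell(n) \to \infty$;
\item[(b)] $s \subset s'$ implies $\sigma_s \subset \sigma_{s'}$ and $\tau_s \subset \tau_{s'}$;
\item[(c)] $\sigma_s \symdiff \tau_s$ is exactly the length-$\ell(|s|)$ initial segment of $s$;
\item[(d)] for each $m \in \omega$, $[\sigma_s] \subset W_m$ and $[\tau_s] \subset W_m$ hold for all $s$ of sufficiently large level.
\end{enumerate}
Defining $f_0(x) = \bigcup_n \sigma_{x \upharpoonright n}$ and $f_1(x) = \bigcup_n \tau_{x \upharpoonright n}$ then produces continuous maps by (a) and (b), mapping into $C = \bigcap_m W_m$ by (d), and satisfying $f_0(x) \symdiff f_1(x) = \bigcup_n (x \upharpoonright \ell(n)) = x$ by (c).

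For the inductive step from level $n-1$ to level $n$, for each $s \in 2^n$ with parent $s' = s \upharpoonright (n-1)$ I write $\sigma_s = \sigma_{s'} \concat u_s$ for some finite string $u_s$ of length $\ell(n) - \ell(n-1)$ yet to be chosen. Condition (c) then forces $\tau_s = \tau_{s'} \concat (u_s \symdiff s[\ell(n-1) : \ell(n)])$. Because $\ell(n) \leq n$, the bits $s[\ell(n-1):\ell(n)]$ are already specified by $s$ at this stage, so this is well defined, and monotonicity of the XOR across levels is preserved. Crucially, the extensions $u_s$ may be chosen independently for different $s \in 2^n$, which gives ample flexibility in the construction.

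The main obstacle is ensuring condition (d), namely picking $u_s$ so that $[\sigma_s] \subset W_{|s|}$ and $[\tau_s] \subset W_{|s|}$ both hold. This reduces to the following finitary Baire category lemma: given finite strings $\alpha, \beta \in \strs$ of equal length, a finite string $t \in \strs$, and a dense open $W \subset \cantor$, there exists a finite string $u$ of length at least $|t|$ with $[\alpha \concat u] \subset W$ and $[\beta \concat (u \symdiff (t \concat 0^{|u|-|t|}))] \subset W$. The argument is that $\{y \in \cantor : \alpha \concat y \in W\}$ is comeager in $\cantor$, and likewise $\{y \in \cantor : \beta \concat (y \symdiff (t \concat 0^\omega)) \in W\}$ is comeager, because $y \mapsto y \symdiff (t \concat 0^\omega)$ is a self-homeomorphism of $\cantor$. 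Their intersection is comeager and in particular nonempty; any point in it has a sufficiently long finite prefix $u$ with both required basic open neighborhoods in $W$, since $W$ is open. At each stage $n$, choosing $\ell(n) - \ell(n-1)$ large enough lets us invoke this lemma for every $s \in 2^n$ and every $W_m$ that we wish to secure by stage $n$, delivering condition (d) in the limit.
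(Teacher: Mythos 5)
Your overall strategy, a Cantor scheme driven by a finitary density lemma that steers both coordinates into a dense open set while prescribing their symmetric difference, is sound and close in spirit to the paper's approach (which is a back-and-forth fusion argument). But there is a real gap in how your finitary lemma interfaces with the scheme. The lemma gives you a $u$ of \emph{some} length $\geq |t|$; you do not control $|u|$. In the scheme you must then set $\ell(n) = \ell(n-1) + |u_s|$, and condition (c) requires $\sigma_s \symdiff \tau_s = s \until{\ell(n)}$. However the extensions your lemma produces differ by $t \concat 0^{|u_s|-|t|}$, so the padding positions $[\ell(n-1)+|t|,\,\ell(n))$ are committed to $0$, whereas the corresponding bits of $s$ are arbitrary; condition (c) fails for any $s$ with a $1$ in that window. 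There is also a mild circularity: $t$ should be $s[\ell(n-1){:}\ell(n)]$, but $\ell(n)$ is not known until after the lemma returns $u$.

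The repair is to strengthen the lemma so that $|u|$ can be prescribed: for fixed $\alpha,\beta$ of equal length and $W$ dense open, there is an $L_0$ such that for every $t$ with $|t| \geq L_0$ there is a $u$ with $|u|=|t|$, $[\alpha\concat u]\subset W$, and $[\beta\concat(u\symdiff t)]\subset W$. (Shrink $[\alpha]$ to some $[\alpha\concat u_1]\subset W$; for each of the finitely many $w\in 2^{|u_1|}$ shrink $[\beta\concat w]$ to $[\beta\concat w\concat v_w]\subset W$; set $L_0=|u_1|+\max_w|v_w|$; given $t$, take $u\supset u_1$ so that $u\symdiff t$ extends $w\concat v_w$ with $w=u_1\symdiff(t\until{|u_1|})$, and fill the remaining positions arbitrarily.) Then require $\ell(n)-\ell(n-1)=L_0$, postponing the step until $n-\ell(n-1)\geq L_0$. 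With that refinement your argument goes through.

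For comparison, the paper treats $f_0$ and $f_1$ asymmetrically within each stage: it first extends the $p$ side to land its cylinder inside $E_n$ and then XOR-corrects $q$ to the same length, then extends $q$ to land inside $E_n$ and XOR-corrects $p$ (whose further extension automatically stays inside $E_n$ since it was already committed). This alternating treatment keeps the two sides length-matched at every half-step and sidesteps the padding problem outright, which is why the paper does not need a simultaneous lemma like yours.
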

\begin{proof} Let $(E_n)$ be a sequence of dense open sets such that
$\bigcap_n E_n \subset C$. We will use $p_n$ and $q_n$ to denote finite strings. 

Fix $x$. Let $p_0$ and $q_0$ be the empty string $\emptystring$. Now given $p_n$, take $p_{n+1}' < p_n$, such that $N_{p_{n+1}'}  \subset E_n$. Define $q_{n+1}' < q_n$ 
so that for all $i < |q_{n+1}'|$, $x(i) =1$ if and only if $p_{n+1}'(i) \ne q_{n+1}'(i)$.
Now take  $q_{n+1} < q_{n+1}'$, such that $N_{q_{n+1}}  \subset E_n$. Define $p_{n+1} < p_{n+1}'$
so that for all $i < |q_{n+1}|$, $x(i) =1$ if and only if $p_{n+1}(i) \ne q_{n+1}(i)$. 

By induction, for all $n$, we have that $p_{n+1}< p_n$, and $q_{n+1} < q_n$. Let $f_0(x) = \lim_n p_n$ and  $f_1(x) = \lim_n q_n$. Because we only used finitely many bits of $x$ at each stage of the construction, this construction gives a continuous map with the desired properties. 
\end{proof}

\begin{lemma}
\label{lem: generic pair}
For any $\alpha$ and any comeager set $C$ in $X^\alpha$, there is a continuous map $f:X^\alpha \rightarrow X^\alpha \times X^\alpha$ such that 
for all~$x$:
\begin{enumerate}
\item $f_0(x) \in C$.
\item $f_1(x) \in C$.
\item $f_0(x) \symdiff^\alpha f_1(x) =x$.
\end{enumerate}
\end{lemma}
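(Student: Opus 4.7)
The plan is to reduce the claim to Lemma~\ref{lem: generic pairold} via the homeomorphism $\gamma^\alpha \from X^\alpha \to 2^\omega$, using the fact (already observed in the excerpt) that $\gamma^\alpha$ intertwines $\symdiff^\alpha$ with the standard symmetric difference $\symdiff$ on $2^\omega$; that is, $\gamma^\alpha(x \symdiff^\alpha y) = \gamma^\alpha(x) \symdiff \gamma^\alpha(y)$.

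First I would handle the degenerate case $\alpha = 0$: here $X^0 = 2$ is discrete, so the only comeager set is $X^0$ itself, and one can simply take $f_0(x) = 0$, $f_1(x) = x$, which trivially satisfies the three properties by the ad hoc definition of $\symdiff^0$.

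Now assume $\alpha \ge 1$, so that $\gamma^\alpha$ is a homeomorphism from $X^\alpha$ onto $2^\omega$. Given a comeager $C \subset X^\alpha$, the set $C' = \gamma^\alpha(C) \subset 2^\omega$ is comeager. Apply Lemma~\ref{lem: generic pairold} to $C'$ to obtain a continuous map $g \from 2^\omega \to 2^\omega \times 2^\omega$ with $g_0(z), g_1(z) \in C'$ and $g_0(z) \symdiff g_1(z) = z$ for every $z$. Then define
\[
f_i(x) = (\gamma^\alpha)^{-1}\bigl(g_i(\gamma^\alpha(x))\bigr) \qquad (i=0,1).
\]
Continuity of $f = (f_0,f_1)$ follows from continuity of $\gamma^\alpha$, $(\gamma^\alpha)^{-1}$, and $g$. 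Properties (i) and (ii) are immediate from $g_i(\gamma^\alpha(x)) \in C'$ and $(\gamma^\alpha)^{-1}(C') = C$. For (iii), applying $\gamma^\alpha$ to both sides of the desired identity $f_0(x) \symdiff^\alpha f_1(x) = x$ and using that $\gamma^\alpha$ intertwines the two symmetric differences reduces the statement to
\[
g_0(\gamma^\alpha(x)) \symdiff g_1(\gamma^\alpha(x)) = \gamma^\alpha(x),
\]
which holds by the defining property of $g$.

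There is no real obstacle here; the only point that requires any care is the compatibility of $\gamma^\alpha$ with the symmetric difference operations, but this was already flagged in the paragraph preceding the lemma (since $\gamma^\alpha$ merely permutes the coordinates of an infinite product of copies of $2$, and $\symdiff^\alpha$ was defined precisely so as to be transported by $\gamma^\alpha$). Hence the lemma is nothing more than the transport of Lemma~\ref{lem: generic pairold} through the coordinate-permuting homeomorphism $\gamma^\alpha$.
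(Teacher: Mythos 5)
Your proof is correct and takes exactly the paper's approach: transport Lemma~\ref{lem: generic pairold} through the coordinate-permuting homeomorphism $\gamma^\alpha$, using that $\gamma^\alpha$ intertwines $\symdiff^\alpha$ with $\symdiff$. The only addition is your explicit treatment of $\alpha = 0$, which the paper leaves implicit.
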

\begin{proof}
This follows from the previous lemma and the fact that 
there is a homeomorphism $\gamma^\alpha:X^\alpha \rightarrow 2^\omega$ such that for $x,y \in X^\alpha$, $x\symdiff^\alpha y$ if and only if $\gamma^\alpha(x) \symdiff \gamma^\alpha(y)$. 
\end{proof}

We now have the following lemma:

\begin{lemma}
  Suppose $\alpha < \omega_1$. Then $D_{K_2^\alpha, 1}$ and $D_{K_2^\alpha,
  2}$ can be separated by a potentially $\bfSigma^0_\alpha$ set, but not by a
  potentially $\bfDelta^0_\alpha$ set.
\end{lemma}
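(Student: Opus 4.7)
The plan has two parts, corresponding to the two directions. The negative direction (no potentially $\bfDelta^0_\alpha$ separator) is a nearly formal consequence of Lemma~\ref{lem: generic pair} and Theorem~\ref{thm: non-separation}, while the positive direction (existence of a potentially $\bfSigma^0_\alpha$ separator) is proved by transfinite induction on $\alpha$.

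For the negative direction, suppose for contradiction that $S \subseteq X^\alpha \times X^\alpha$ is $\bfDelta^0_\alpha$ in $\tau' \times \tau'$ for some Polish topology $\tau'$ on $X^\alpha$ with the same Borel $\sigma$-algebra as the usual topology. By the classical fact (Kuratowski) that any two such Polish topologies agree on a comeager $G_\delta$ set $C \subseteq X^\alpha$, the trace $S \cap (C \times C)$ is $\bfDelta^0_\alpha$ in the usual product topology. Apply Lemma~\ref{lem: generic pair} to this $C$ to obtain a continuous (usual-topology) map $f \from X^\alpha \to X^\alpha \times X^\alpha$ with $f_0(x), f_1(x) \in C$ and $f_0(x) \symdiff^\alpha f_1(x) = x$. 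The preimage $T = f^{-1}(S)$ is then $\bfDelta^0_\alpha$ in the usual topology on $X^\alpha$, and since $f_0(x) \symdiff^\alpha f_1(x) = x$, the earlier characterizations of $D_{K_2^\alpha, 1}$ (as $\{(u, v) : u \symdiff^\alpha v \in \FF^\alpha\}$) and $D_{K_2^\alpha, 2}$ (Lemma~\ref{lem path length 2}, as $\{(u, v) : u \symdiff^\alpha v \in \II^\alpha\}$) imply that $T$ separates $\FF^\alpha$ from $\II^\alpha$. Since $\bfDelta^0_\alpha \subseteq \bfPi^0_{\alpha+1}$, this contradicts Theorem~\ref{thm: non-separation}.

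For the positive direction, the plan is to build, by induction on $\gamma \leq \alpha$, a Polish topology $\tau^\gamma$ on $X^\gamma$ and a set $T^\gamma \in \bfPi^0_\gamma(\tau^\gamma \times \tau^\gamma)$ with $D_{K_2^\gamma, 2} \subseteq T^\gamma$ and $T^\gamma \cap D_{K_2^\gamma, 1} = \emptyset$, so that the complement of $T^\gamma$ is the desired potentially $\bfSigma^0_\gamma$ separator. For the successor step $\alpha = \beta + 1$, take $\tau^\alpha = (\tau^\beta)^\omega$ on $X^\alpha = (X^\beta)^\omega$ and let
\[T^\alpha = (X^\alpha \times X^\alpha) \setminus \bigl\{(x, y) : \exists n\, \forall m \geq n\, (x(m), y(m)) \in T^\beta\bigr\}.\]
The inductive description $D_{K_2^\alpha, i} = \{(x, y) : \exists n\, \forall m \geq n\, (x(m), y(m)) \in D_{K_2^\beta, i}\}$ coming from the Louveau jump shows that the set in braces contains $D_{K_2^\alpha, 2}$ and is disjoint from $D_{K_2^\alpha, 1}$, so $T^\alpha$ separates as required. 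A complexity count (coordinate projections are continuous, countable intersections preserve $\bfPi^0_\beta$ for $\beta \geq 1$, and countable unions of $\bfPi^0_\beta$ give $\bfSigma^0_{\beta+1} = \bfSigma^0_\alpha$) then yields $T^\alpha \in \bfPi^0_\alpha$. The limit case uses the cofinal sequence $\pi_\alpha$ in place of the constant $\beta$, together with the mixed-level definition of $\bfSigma^0_\alpha$ for limit $\alpha$.

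The main obstacle is closing up the complexity bookkeeping at the base case and the limit stages so that the bound comes out at exactly level $\alpha$ rather than $\alpha+1$. At the bottom ($\gamma = 0$, where $T^0$ is merely clopen and countable intersections of clopens are only closed, not clopen), the naive inductive step formally yields $\bfSigma^0_2$ at $\alpha = 1$, so one needs an explicit Polish refinement of the usual topology on $X^1 = 2^\omega$ --- making $\FF^1, \II^1$ and enough further tail structure clopen --- to realize an honest $\bfSigma^0_1$ separator. The limit stages require analogous care, since the naive ``$\exists n\, \forall m \geq n$'' construction produces a countable union of $\bfPi^0_\alpha$ sets rather than a $\bfSigma^0_\alpha$ set; one must leverage the inductive structure $X^\alpha = \prod_n X^{\pi_\alpha(n)}$ and possibly add further Borel sets to $\tau^\alpha$ as open to achieve the required complexity.
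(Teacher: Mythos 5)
Your negative direction is essentially the paper's argument. The paper phrases the topology change as ``the identity map $(X^\alpha,\tau)\to(X^\alpha,\tau')$ is continuous on a comeager set $C$'' and then composes $f_0,f_1$ from Lemma~\ref{lem: generic pair} with that identity, whereas you invoke comeager agreement of the two topologies and restrict the separating set to $C\times C$; these are the same computation, and your conclusion (a $\bfDelta^0_\alpha$, hence $\bfPi^0_{\alpha+1}$, separator for $\FF^\alpha,\II^\alpha$, contradicting Theorem~\ref{thm: non-separation}) is correct.

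Your positive direction, however, is a genuine and unnecessary departure, and the obstruction you flag at the end is real rather than a bookkeeping nuisance. The paper does not build any new topology at all: it uses $D_{K_2^\alpha,1}=\{(x,y):x\symdiff^\alpha y\in\FF^\alpha\}$, $D_{K_2^\alpha,2}=\{(x,y):x\symdiff^\alpha y\in\II^\alpha\}$, the continuity of $(x,y)\mapsto x\symdiff^\alpha y$, and the disjoint $\bfSigma^0_{\alpha+1}$ separation of $\FF^\alpha,\II^\alpha$ from Theorem~\ref{thm: non-separation}, to produce a separator by direct preimage. Note that this preimage is $\bfSigma^0_{\alpha+1}$, not $\bfSigma^0_\alpha$ --- which is exactly the level at which your inductive construction keeps landing. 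That is no accident: running your own negative-direction argument with $\bfSigma^0_\alpha$ in place of $\bfDelta^0_\alpha$ shows that a potentially $\bfSigma^0_\alpha$ separator for $D_{K_2^\alpha,1},D_{K_2^\alpha,2}$ would yield a $\bfSigma^0_\alpha\subseteq\bfPi^0_{\alpha+1}$ separator for $\FF^\alpha,\II^\alpha$, contradicting Theorem~\ref{thm: non-separation}. So the level-$\alpha$ positive claim cannot hold; the ``$\bfSigma^0_\alpha$'' in the lemma's statement and in the paper's one-line proof of the positive half appears to be an off-by-one slip for $\bfSigma^0_{\alpha+1}$. This is harmless downstream (only the negative half is used in the main theorem, and there $\alpha$ is chosen \emph{after} the target complexity $\beta$), but it means you should abandon the transfinite topology-refinement and take the direct $\bfSigma^0_{\alpha+1}$ pullback. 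Incidentally, your successor-step set $T^\alpha$ as written contains $D_{K_2^\alpha,1}$ and excludes $D_{K_2^\alpha,2}$, flipping the roles relative to your inductive hypothesis on $T^\beta$, so the induction would also need a sign fix if you were to pursue it.
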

\begin{proof}
  By the above lemmas, we have $D_{K_2^\alpha, 1} = \{(x,y) \colon x
  \symdiff^\alpha y \in \FF^\alpha\}$, and $D_{K_2^\alpha, 2} = \{(x,y)
  \colon x \symdiff^\alpha y \in \II^\alpha\}$. Now the function 
  $(x,y) \mapsto x \symdiff^\alpha y$ is continuous and so by
  Theorem~\ref{thm: non-separation}, 
  $D_{K_2^\alpha, 1}$ and $D_{K_2^\alpha, 2}$ can be separated by a
  potentially $\bfSigma^0_\alpha$ set.

  Suppose there was a topology $\tau'$ on $X^\alpha$ so that
  $D_{K_2^\alpha, 1}$ and $D_{K_2^\alpha, 2}$ were separable by a
  potentially $\bfDelta^0_\alpha$ set.
  Let $\tau$ be the usual topology on $X^\alpha$. 
  Then there is a comeager set $C$ on which the
  identity function $id \from (X^\alpha,\tau) \to (X^\alpha,\tau')$ is
  continuous. Let $f_0$ and $f_1$ be as in Lemma~\ref{lem: generic pair}.
  Then $id \circ f_0$ and $id \circ f_1$ are continuous, and taking the
  preimage of the separating set under $x \mapsto (id \circ f_0(x), id
  \circ f_1(x))$ would separate $\II^\alpha$ and $\FF^\alpha$ by a
  $\bfDelta^0_\alpha$ set, contradicting Theorem~\ref{thm: non-separation}. 
\end{proof}

We are now ready to prove (a slight generalization of)
Theorem~\ref{Louveau_jump} from the introduction.

\begin{theorem}
Let $G$ be a triangle-free analytic graph on a Polish space, with at least one
edge.  Then there is no Borel homomorphism from $G^\lv$ to $G$.
\end{theorem}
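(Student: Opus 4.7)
The plan is to assume a Borel homomorphism $G^\lv \to G$ exists and derive a contradiction by showing that separating $D_{G,1}$ from $D_{G,2}$ would then require arbitrarily high Borel complexity, which is impossible for a fixed analytic graph.

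First, I will manufacture Borel homomorphisms $K_2^\alpha \to G$ for every countable $\alpha$. Since $G$ has an edge $\{u,v\}$, the map $K_2 \to G$ sending the two vertices to $u$ and $v$ is a (trivially Borel) homomorphism. Lemma~\ref{lem: alpha homomorphism} lifts this to a Borel homomorphism $K_2^\alpha \to G^\alpha$, and Lemma~\ref{lem: alpha to G}, applied to the standing assumption, produces a Borel homomorphism $G^\alpha \to G$. Composing these yields a Borel homomorphism $K_2^\alpha \to G$ for every countable $\alpha$.

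Next, I will bound the complexity of separating $D_{G,1}$ from $D_{G,2}$. Since $G$ is triangle-free these sets are disjoint, and since $G$ is analytic both $D_{G,1} = G$ and $D_{G,2}$ are analytic. Lusin separation therefore supplies a Borel separating set, which is $\bfSigma^0_\gamma$ for some fixed countable $\gamma$. A short induction on $\alpha$ shows that $K_2^\alpha$ is triangle-free, since any $3$-cycle in $H^\lv$ forces a $3$-cycle at some sufficiently late coordinate of $H$; hence Proposition~\ref{potential_complexity} applies to each Borel homomorphism $K_2^\alpha \to G$ and gives that $D_{K_2^\alpha, 1}$ and $D_{K_2^\alpha, 2}$ are separated by a potentially $\bfSigma^0_\gamma$ set.

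Finally, fix any countable $\alpha > \gamma$. In the witnessing topology, a $\bfSigma^0_\gamma$ set is automatically $\bfDelta^0_\alpha$, so $D_{K_2^\alpha, 1}$ and $D_{K_2^\alpha, 2}$ are separated by a potentially $\bfDelta^0_\alpha$ set, contradicting the previous lemma. The essential mathematical content has already been packaged into the preceding results — in particular the sharp non-separation of the iterated Fr\'echet ideal and filter from Theorem~\ref{thm: non-separation} — so the only hazard in writing this up is bookkeeping: one must confirm triangle-freeness of each $K_2^\alpha$ so that Proposition~\ref{potential_complexity} is legitimately applicable at every stage.
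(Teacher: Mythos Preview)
Your proposal is correct and follows essentially the same route as the paper's proof: assume a Borel homomorphism $G^\lv\to G$, bootstrap via Lemmas~\ref{lem: alpha homomorphism} and~\ref{lem: alpha to G} to obtain Borel homomorphisms $K_2^\alpha\to G$ for all countable $\alpha$, then invoke Proposition~\ref{potential_complexity} together with the non-separation lemma for $D_{K_2^\alpha,1}$ and $D_{K_2^\alpha,2}$ to contradict the fixed Borel complexity of a separating set for $D_{G,1}$ and $D_{G,2}$. The only cosmetic difference is that the paper fixes the complexity $\beta$ first and then chooses $\alpha$ so that $\FF^\alpha,\II^\alpha$ are not $\bfDelta^0_\beta$-separable, whereas you pick any $\alpha>\gamma$; and the paper leaves the triangle-freeness of $K_2^\alpha$ implicit (it is bipartite, as noted earlier in the section), while you spell it out.
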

\begin{proof}
  Since $D_{G,1}$ and $D_{G,2}$ are analytic sets, they can be separated by
  a Borel set which is $\bfDelta^0_\beta$ for some countable ordinal $\beta$. Let
  $\alpha$ be such that $\FF^\alpha$ and $\II^\alpha$ are not separable by
  a $\bfDelta^0_\beta$ set. 
 
  Assume that there is a Borel homomorphism from $G^\lv$ into $G$. By
  Lemma~\ref{lem: alpha to G}, if there is a Borel homomorphism from
  $G^\lv$ into $G$, then there is a Borel homomorphism from $G^\alpha$ to
  $G$. By Lemma~\ref{lem: alpha homomorphism}, there is Borel homomorphism
  from $K^\alpha_2$ to $G^\alpha$ and hence a Borel homomorphism from
  $K^\alpha_2$ to $G$. By Proposition~\ref{potential_complexity}, this
  implies $D_{K^\alpha_2,1}$ and $D_{K^\alpha_2,2}$ can be separated by a
  Borel set which is potentially $\bfDelta^0_\beta$, contradicting our choice of
  $\alpha$.
\end{proof}

\section{Separating the Iterated Fr\'{e}chet Filters and Ideals}
\label{Fr_section}

In this section, we give a new proof of Theorem~\ref{thm: non-separation}. 

We begin by proving
that $\FF^\alpha$ and $\II^\alpha$ can be separated by disjoint 
$\SB{\alpha+1}$ sets. The idea here is based on a trick for switching
quantifier order which is encapsulated in the following lemma.

\begin{lemma}
\label{lem: separating core}
For each $n,k \in \N$ let $A_{n,k}$ and $B_{n,k}$ be disjoint $\SB{\alpha}$ subsets of a Polish space $X_{n,k}$. Let 
\begin{align*}
A &= \{x \in  \textstyle\prod X_{n, k}\colon  (\exists m)(\forall n >m)(\exists j)(\forall k >j)(
x(n, k) \in A_{n,k})\},\\
B &=\{x \in \textstyle\prod X_{n, k} \colon  (\exists m)(\forall n >m)(\exists j)(\forall k >j)(
x(n , k) \in B_{n,k})\}.
\end{align*}
Then there are $A^*$, $B^*$, disjoint $\SB{\alpha+2}$ subsets of $\textstyle\prod X_{n, k}$ such that $A \subset A^*$ and $B \subset B^*$.
\end{lemma}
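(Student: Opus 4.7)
The goal is to produce disjoint $\SB{\alpha+2}$ supersets of $A$ and $B$. A naive Borel-class calculation of the four-quantifier formula $\exists m\,\forall n > m\,\exists j\,\forall k > j$ places $A$ and $B$ only in $\SB{\alpha+4}$, so the lemma asks for a savings of two Borel levels; these must come from the hypothesis that $A_{n,k}$ and $B_{n,k}$ are disjoint. My plan is to extract the two levels via two successive applications of separation in the Polish-space setting.

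First, invoke $\SB{\alpha}$-separation to replace each disjoint pair $(A_{n,k}, B_{n,k})$ by a $\DB{\alpha}$-partition $(S_{n,k}, T_{n,k})$ of $X_{n,k}$ with $A_{n,k} \subseteq S_{n,k}$ and $B_{n,k} \subseteq T_{n,k}$. Define
\[A_n^\sharp := \{x : \exists j\, \forall k > j\, (x(n,k) \in S_{n,k})\}, \qquad B_n^\sharp := \{x : \exists j\, \forall k > j\, (x(n,k) \in T_{n,k})\}.\]
Since $S_{n,k}\in\PB{\alpha}$, the inner $\bigcap_{k > j}$ remains in $\PB{\alpha}$, and the outer $\exists j$ places $A_n^\sharp, B_n^\sharp$ in $\SB{\alpha+1}$ --- one Borel level below the naive $\SB{\alpha+2}$ complexity of the raw ``eventually in $A_{n,k}$'' set. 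This one-level savings at the innermost $\exists j\,\forall k > j$ block is the first half of the trick. Crucially, $A_n^\sharp$ and $B_n^\sharp$ are automatically disjoint because $S_{n,k}$ and $T_{n,k}$ partition $X_{n,k}$.

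Second, apply $\SB{\alpha+1}$-separation to the disjoint pair $(A_n^\sharp, B_n^\sharp)$ to pick, for each $n$, a $\DB{\alpha+1}$ subset $U_n$ of the product space with $A_n^\sharp \subseteq U_n$ and $B_n^\sharp \cap U_n = \emptyset$. Then set
\[A^* := \bigcup_m \bigcap_{n > m} U_n, \qquad B^* := \bigcup_m \bigcap_{n > m} (\textstyle\prod X_{n,k} \setminus U_n).\]
Since each $U_n$ is in $\PB{\alpha+1}$ and $\PB{\alpha+1}$ is closed under countable intersection, the inner intersection stays in $\PB{\alpha+1}$, and the outer $\bigcup_m$ places $A^*\in \SB{\alpha+2}$; symmetrically $B^*\in\SB{\alpha+2}$. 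The containment $A\subseteq A^*$ is traced by $\{x : \exists j\,\forall k > j\,(x(n,k)\in A_{n,k})\} \subseteq A_n^\sharp \subseteq U_n$ together with the matching $\exists m\,\forall n > m$ structure, and symmetrically for $B\subseteq B^*$. Disjointness of $A^*$ and $B^*$ is immediate: any $x \in A^* \cap B^*$ would, for every sufficiently large $n$, lie simultaneously in $U_n$ and in its complement.

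The main obstacle I anticipate is handling small $\alpha$ for which $\SB{\alpha}$-separation does not hold --- notably $\alpha = 1$, where disjoint open sets in a Polish space need not admit a clopen separator. In such cases the first step must be refined, for example by absorbing the missing separation into a higher-level step or by a direct argument exploiting the structure of the four-quantifier formula. I expect this low-ordinal wrinkle to be where the bulk of the remaining care is needed to convert the plan into a complete proof.
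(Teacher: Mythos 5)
Your plan hinges on two invocations of a ``$\SB{\xi}$-separation'' principle: in step 1 you replace the disjoint pair $(A_{n,k},B_{n,k})$ by a $\DB{\alpha}$-partition of $X_{n,k}$, and in step 4 you choose a $\DB{\alpha+1}$ set $U_n$ with $A_n^\sharp \subset U_n$ and $B_n^\sharp \cap U_n = \emptyset$. Neither step is available: the additive classes $\SB{\xi}$ satisfy the \emph{reduction} property, not the \emph{separation} property, and a nonselfdual pointclass cannot have both. It is $\PB{\xi}$ that has separation. Concretely, take $C = \Q \cap [0,1]$ and $D = (\Q + \sqrt 2)\cap[0,1]$, two disjoint $\SB{2}$ sets. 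A $\DB{2}$ set $U$ with $C\subset U$ and $D\cap U = \emptyset$ would be a dense $G_\delta$ whose complement is also a dense $G_\delta$, which is impossible by Baire. So the failure you anticipate at the end of your write-up is not a ``low-ordinal wrinkle'' confined to $\alpha=1$; it kills steps 1 and 4 at every level $\alpha$.

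Your underlying instinct---save one level at the inner $\exists j\,\forall k$ block and one level at the outer $\exists m\,\forall n$ block, using disjointness of $A_{n,k}$ and $B_{n,k}$ twice---is exactly right, and is what the paper does, but the mechanism has to be different. Instead of upgrading the $A_{n,k}$ to an ambiguous class (which you cannot), swap the inner quantifier block from ``eventually'' to ``infinitely often'': set $A'$ (resp.~$B'$) to be the set of $x$ with $\exists m\,\forall n>m\,\forall j\,\exists k>j\,(x(n,k)\in A_{n,k})$ (resp.~$B_{n,k}$). Since $A_{n,k}\in\SB{\alpha}$, the block $\forall j\,\exists k>j$ lands in $\PB{\alpha+1}$ directly, so $A',B'\in\SB{\alpha+2}$ with no separation needed. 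Then $A\subset A'$ since cofinitely-often implies infinitely-often, and $A\cap B' = B\cap A' = \emptyset$ because $A_{n,k}\cap B_{n,k}=\emptyset$ (infinitely-often-in-$A_{n,k}$ is incompatible with cofinitely-often-in-$B_{n,k}$). The sets $A'$ and $B'$ may overlap, but now one applies the \emph{reduction} property of $\SB{\alpha+2}$---which is valid---to get disjoint $\SB{\alpha+2}$ sets $A^*\subset A'$, $B^*\subset B'$ with $A^*\cup B^* = A'\cup B'$, and the containments $A\subset A^*$, $B\subset B^*$ follow from $A\cap B' = B\cap A' = \emptyset$.
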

\begin{proof}
 Let us define the sets
 \begin{align*}
A' &= \{x \in  \textstyle\prod X_{n, k}\colon  (\exists m)(\forall n >m)(\forall j)(\exists k >j)(
x(n, k) \in A_{n,k})\},\\
B' &=\{x \in \textstyle\prod X_{n, k} \colon  (\exists m)(\forall n >m)(\forall j)(\exists k >j)(
x(n , k) \in B_{n,k})\}.
\end{align*}

The sets $A'$ and $B'$ are $\SB{\alpha+2}$. Now $A \subset A'$ because if for some $n$ there are cofinitely many $k$ such that  $x(n, k) \in A_{n,k}$ then there are infinitely many $k$ such that $x(n, k) \in A_{n,k}$.
Also $B \cap A' = \emptyset$ because if there are  infinitely many $k$ such that $x(n, k) \in A_{n,k}$, then there cannot be cofinitely many $k$ such that $x(n, k) \in B_{n,k}$ (as $A_{n,k}$ and $B_{n,k}$ are disjoint).
Similarly we have that $B \subset B'$ and $A \cap B'=\emptyset$. Finally apply separation to obtain disjoint $\SB{\alpha+2}$ sets $A^*$ and $B^*$ with $A^* \subset A'$, 
$B^* \subset B'$, and $A^* \cup B^* = A' \cup B'$. Hence $A \subset A^*$ and $B\subset B^*$.
\end{proof}

\begin{lemma}
For all $\alpha$, 
$\FF^\alpha$ and $\II^\alpha$ can be separated by disjoint $\SB{\alpha+1}$ sets. \end{lemma}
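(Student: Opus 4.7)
The plan is to proceed by transfinite induction on $\alpha$, producing at each stage disjoint $\SB{\alpha+1}$ sets $A^\alpha \supset \FF^\alpha$ and $B^\alpha \supset \II^\alpha$. The base case $\alpha=0$ is immediate, since $\FF^0 = \{1\}$ and $\II^0 = \{0\}$ are themselves disjoint clopen sets. The case $\alpha=1$ is likewise direct: $\FF^1$ and $\II^1$ are the sets of cofinite and cofinitely-zero members of $2^\omega$ respectively, each of which is $\SB{2}$, so they separate themselves. These small cases must be treated by hand because $\FF^0$ carries no product structure to unfold, and the uniform construction below needs $\gamma \geq 1$.

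For a limit ordinal $\alpha$, I would use the naive construction
\[
A^\alpha = \{x \in X^\alpha : \exists n\, \forall m > n\, (x\coord{m} \in A^{\pi_\alpha(m)})\},
\]
and analogously for $B^\alpha$. Disjointness follows at once from disjointness of the $A^{\pi_\alpha(m)}, B^{\pi_\alpha(m)}$ supplied by the inductive hypothesis. For the complexity estimate, since $\alpha$ is a limit we have $\pi_\alpha(m)+1 < \alpha$, so each $A^{\pi_\alpha(m)}$ is $\SB{\pi_\alpha(m)+1} \subset \PB{\alpha}$; the conjunction $\forall m > n$ remains in $\PB{\alpha}$, and the outer $\exists n$ yields a $\SB{\alpha+1}$ set, as required.

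For a successor $\alpha = \gamma + 1$ with $\gamma \geq 1$, the naive approach is too coarse: pushing the outer $\exists \forall$ through $\SB{\gamma+1}$ inputs produces only a $\SB{\gamma+3}$ set. The plan is instead to unfold $\FF^\gamma$ one step using its definition, writing
\[
\FF^\alpha = \{x : \exists n\, \forall m > n\, \exists j\, \forall k > j\, (x\coord{m}\coord{k} \in \FF^{\pi_\gamma(k)})\},
\]
with the analogous expression for $\II^\alpha$. By induction, $A^{\pi_\gamma(k)}, B^{\pi_\gamma(k)}$ are disjoint $\SB{\pi_\gamma(k)+1}$ sets, and since $\pi_\gamma(k)+1 \leq \gamma$ in both subcases (equality when $\gamma$ is a successor, strict when $\gamma$ is a limit) they may be regarded as $\SB{\gamma}$ sets. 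Lemma~\ref{lem: separating core} then applies with these $\SB{\gamma}$ inputs and the pattern of quantifiers above, producing disjoint $\SB{\gamma+2} = \SB{\alpha+1}$ sets $A^*, B^*$ that contain $\FF^\alpha$ and $\II^\alpha$ respectively.

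The main obstacle is precisely this successor case. A direct $\exists\forall$ quantification over the inductive $\SB{\gamma+1}$ sets costs two complexity levels rather than one, and the whole point of unfolding $\FF^\gamma$ into its defining $\exists\forall$ form is to exhibit the $\exists\forall\exists\forall$ pattern on which Lemma~\ref{lem: separating core} performs its quantifier-swap, compressing what would be $\SB{\gamma+4}$ down to $\SB{\gamma+2}$ and saving exactly the level we need to close the induction.
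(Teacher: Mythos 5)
Your proof is correct and follows essentially the same route as the paper: both rely on Lemma~\ref{lem: separating core} applied after unfolding two levels of the $\exists\forall$ definition to expose the $\exists\forall\exists\forall$ pattern. The only differences are organizational — you treat every successor $\gamma+1$ with $\gamma\geq 1$ uniformly and build separating sets explicitly at limit stages, whereas the paper splits the successor case into ``$\beta+1$, $\beta$ limit'' and ``$\beta+n$, $n\geq 2$'' and disposes of limit $\alpha$ by noting $\FF^\alpha, \II^\alpha$ are themselves $\SB{\alpha+1}$.
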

\begin{proof} If $\alpha$ is $0$, $1$, or a limit ordinal then there  is nothing to prove because the sets $\FF^\alpha$ and $\II^\alpha$ already have the required complexity. 

If $\alpha = \beta+1$ where $\beta$ is a limit ordinal, then
\begin{align*}
\FF^\alpha &= \{x \in X^\alpha \colon (\exists m)(\forall n >m)(\exists j)(\forall k >j)(
x(n)(k) \in \FF^{\pi_\beta(k)})\},\\
\II^\alpha &= \{x \in X^\alpha \colon (\exists m)(\forall n >m)(\exists j)(\forall k >j)(
x(n)(k) \in \II^{\pi_\beta(k)})\}.
\end{align*}
As for all $k$, the sets $\FF^{\pi_\beta(k)}$ and $\II^{\pi_\beta(k)}$ are
disjoint $\SB{\beta}$, we can apply Lemma~\ref{lem: separating core}, to find disjoint $\SB{\alpha+1}$ sets that separate $\FF^\alpha$ and $\II^\alpha$.  

Finally, we need to consider the case that $\alpha$ is $\beta +n$ where $n
\ge 2$ and $\beta$ is a limit ordinal or $0$. In this situation we have that 
\begin{align*}
\FF^\alpha &= \{x \in X^\alpha \colon (\exists m)(\forall n >m)(\exists j)(\forall k >j)(
x(n)(k) \in \FF^{\beta +n-2})\},\\
\II^\alpha &= \{x \in X^\alpha \colon (\exists m)(\forall n >m)(\exists j)(\forall k >j)(
x(n)(k) \in \II^{\beta +n-2})\}.
\end{align*}
By induction we can assume that we have disjoint $\SB{\beta +n-1}$ sets $A$
and $B$ such that $\FF^{\beta +n-2} \subset A$ and $\II^{\beta +n-2} \subset B$. Hence 
\begin{align*}
\FF^\alpha &\subset \{x \in X^\alpha \colon (\exists m)(\forall n >m)(\exists j)(\forall k >j)(
x(n)(k) \in A)\},\\
\II^\alpha &\subset \{x \in X^\alpha \colon (\exists m)(\forall n >m)(\exists j)(\forall k >j)(
x(n)(k) \in B)\}.
\end{align*}
Thus again by Lemma~\ref{lem: separating core} we have that $\FF^\alpha$ and $\II^\alpha$ can be separated by disjoint $\SB{\alpha+1}$ sets. 
\end{proof}

The next goal is to show that if $\alpha \ge 1$, the separation obtained in the previous lemma is optimal. The case for $\alpha=1$ is simple.
\begin{lemma}
\label{lem: 1 not separable}
There do not exist disjoint $\PB{2}$ sets $A$ and $B$ such that $\FF^1 \subset A$ and $\II^1 \subset B$.
\end{lemma}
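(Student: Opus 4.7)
The plan is to use a Baire category argument. The key observations are that both $\FF^1$ (the cofinite subsets of $\omega$) and $\II^1$ (the finite subsets) are dense in $\cantor$: given any finite binary string $\sigma$, extending it by all $1$'s produces a cofinite set, and extending by all $0$'s produces a finite set. So any open set containing either $\FF^1$ or $\II^1$ is dense.

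Suppose for contradiction that $A$ and $B$ are disjoint $\PB{2}$ sets with $\FF^1 \subset A$ and $\II^1 \subset B$. Write $A = \bigcap_n U_n$ and $B = \bigcap_n V_n$ with each $U_n, V_n \subset \cantor$ open. Since $\FF^1 \subset A \subset U_n$ and $\FF^1$ is dense, each $U_n$ is dense open; similarly each $V_n$ is dense open. Hence both $A$ and $B$ are dense $G_\delta$ subsets of $\cantor$, and therefore comeager.

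By the Baire category theorem the intersection $A \cap B$ of two comeager sets is comeager, and in particular nonempty. This contradicts the assumption that $A$ and $B$ are disjoint, completing the proof.

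There is no real obstacle here; the lemma for $\alpha = 1$ is the easy base case. The only thing to check carefully is the density of $\FF^1$ and $\II^1$, which is immediate from the definitions. The substantive non-separation phenomenon only begins at $\alpha \geq 2$, where one needs the more delicate analysis carried out in the remainder of Section~\ref{Fr_section}.
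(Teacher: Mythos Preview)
Your proof is correct and follows essentially the same Baire category argument as the paper: both observe that $\FF^1$ and $\II^1$ are dense, so any $\PB{2}$ set containing either is a dense $G_\delta$ and hence comeager, forcing $A\cap B\neq\emptyset$. Your version simply unpacks the step ``dense $\PB{2}$ implies comeager'' by writing out the $G_\delta$ decomposition explicitly.
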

\begin{proof}
If $\FF^1 \subset A$ then $A$ must be comeager (as any dense $\PB{2}$ set is comeager). If $\II^1 \subset B$ then $B$ is comeager. Hence $A \cap B \ne \emptyset$.
\end{proof}

For $\alpha >1$, our plan is to take $x \in X^1$ and continuously encode
$x$ into an element $\rho(x)$ of $X^\alpha$ such that if $x \in \FF^1$, then
$\rho(x) \in \FF^\alpha$ and if 
$x \in \II^1$, then $\rho(x) \in \II^\alpha$.
We want to do this in such a way that,  relative to some parameter $p$,
$x$ can uniformly compute the $\alpha$\textsuperscript{th} iterate of the Turing jump of
$\rho(x)$ (or the $(\alpha -1)$\textsuperscript{th} iterate of the jump if $\alpha <\omega$). From this it will follow that 
if $\FF^\alpha$ and $\II^\alpha$ are separable by $\DL{\alpha +1}$ sets, then 
$\FF^1$ and $\II^1$ are separable by $\DL{2}(p)$ sets which we know by Lemma~\ref{lem: 1 not separable} is impossible.

We will introduce some of  the main ideas needed with an example. 
Let $T$ be the full $\omega$-branching tree of height $2$.  Hence the nodes of $T$ are all strings of natural numbers of length 
$0$, $1$, or $2$. We will denote the root of the tree by $\emptystring$.
We are now going to label all nodes of $T$ except $\emptystring$ with
either $0$ or $1$. This is a function $f:T\setminus \{\emptystring\} \rightarrow 2$. We will encode $x$ at the first level of the tree so for all  $i \in \N$, $f(i) = x(i)$. 
At the second level of the tree, we will label nodes so that for all $i$, 
$\lim_j f(i \concat j)$ exists and is equal to $f(i)$. 
In order to complete our definition of $f$ we need some additional information as to what values $f(i\concat j)$ should take before the limit is reached. For this, we will take an additional function 
$g$ that maps from the nodes of $T$, that are neither the root nor a leaf, to $2^{<\omega}$. For our example tree,  $g$ is a function from nodes of length $1$ to $2^{<\omega}$.  Now we can complete our definition of $f$ as follows. Fix $i$.
\begin{enumerate}
 \item If $j < |g(i)|$, the $f(i \concat j) = g(i)(j)$.
\item If $j = |g(i)|$, the $f(i \concat j) = 1 - f(i)$.
 \item If $j > |g(i)|$, the $f(i \concat j) = f(i)$.
 \end{enumerate}
Defining $f(i \concat j) = 1 - f(i)$ for $j = |g(i)|$ allows us to recover $g(i)$ from the function $j \mapsto f(i \concat j)$.

Now consider $f$, restricted to the leaf nodes. We can regard $f$ as an element of $X^2$ by  mapping $f$ to $z \in X^2$ where $z(i)(j) = f( i \concat j)$. Further, if $x \in \FF^1$, then $z \in \FF^2$, and if $x \in \II^1$, then $z \in \II^2$. Hence if we fix $g$, we have a mapping 
$\rho: X^1 \rightarrow X^2$ with the desired properties. 
We will show that if $g$ is sufficiently generic, then \textit{for any $x$}, $\rho(x)'$ is uniformly Turing reducible to $x \oplus g \oplus 0'$. Given this, assume that  $\FF^2$ and $\II^2$ are separable by disjoint $\DL{3}$ sets $A$ and $B$ with 
$\FF^2 \subset A$ and $\II^2 \subset B$. 
This means that the double jump of $\rho(x)$ can uniformly determine whether $\rho(x)$ is in $A$ or $B$, and hence  $(g\oplus x \oplus 0')'$ can uniformly determine if $\rho(x)$ is in $A$ or $B$. As $g$ is independent of $x$, this means that the sets  $\{x \in X^1 \colon \rho(x) \in A\}$ and 
$\{x \in X^1 \colon \rho(x) \in B\}$ are $\DL{2}(g \oplus 0')$. But these sets separate $\FF^1$ from $\II^1$, a contradiction.  By relativization, we get that $\FF^2$ and $\II^2$ are not separable by  $\DB{3}$ sets. As separation by disjoint  $\PB{\alpha}$ sets implies separation by disjoint  $\DB{\alpha}$ sets, we conclude that 
$\FF^2$ and $\II^2$ are not separable by  $\PB{3}$ sets.

Our proof of Theorem~\ref{thm: non-separation} is written in the language of effective descriptive
set theory. We will establish a lightface version of this theorem, and then
obtain the full result by relativization. Let us start by defining a
wellfounded tree $T_\alpha \subset \omega^{< \omega}$ for each computable
ordinal $\alpha$. Let $T_0$ be the tree consisting of just the empty
string, so $T_0 = \{\emptystring\}$. For $\alpha > 0$, let $T_\alpha$ be the
tree having  $\omega$ many branches at the root with the tree above the
$n$\textsuperscript{th} node being $T_{\pi_\alpha(n)}$, so $n \concat \sigma \in T_{\alpha}$
if $\sigma \in T_{\pi_\alpha(n)}$. We can assume that the functions
$\pi_\beta$, for $\beta \le \alpha$, are all uniformly computable.  Note
that for each $\alpha$, the tree $T_\alpha$ has rank $\alpha$.

In what follows, fix a computable ordinal $\gamma$ and take $T$ to be
$T_\gamma$. For all $\alpha \le \gamma$ we define the following subsets of
$T$.
\begin{align*}
N_\alpha &= \{ \sigma \in T \colon \rank(\sigma) = \alpha\}\\
L_\alpha &= \{ \sigma \in T \colon \rank(\sigma) < \alpha\}\\
A_\alpha &= \{ \sigma \in T \colon \rank(\sigma) < \alpha \land
\rank(\sigma^{-}) \ge \alpha\}
\end{align*}
Here by $\sigma^{-}$ we mean $\sigma$ without its last bit. 

Note that $N_0=A_1$ and is the set of all leaf nodes of  $T$. However, if 
$\gamma \ge \omega$, then $N_1 \subsetneq A_2$. 

The following lemma follows immediately from these definitions.
 
\begin{lemma}\
\begin{enumerate}
 \item $N_\alpha \subset A_{\alpha+1}$.
 \item $A_{\alpha+1}\setminus N_{\alpha} \subset A_\alpha$. 
 \item For each $0 < \alpha \le \gamma$ the set $A_\alpha$ is a maximal anti-chain.
 \item $A_\gamma$ is the set of successors of the root of $T_\gamma$.  \qed
\end{enumerate}
\end{lemma}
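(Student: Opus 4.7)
The plan is to derive all four parts directly from the definitions, leaning on two elementary facts about the well-founded tree $T = T_\gamma$: (a) rank strictly decreases from parent to child, so $\rank(\sigma^-) > \rank(\sigma)$ for every non-root $\sigma$; and (b) every non-root node $\sigma = n \concat \tau$ of $T_\gamma$ lies in the subtree $T_{\pi_\gamma(n)}$ and so has rank at most $\pi_\gamma(n) < \gamma$. Both facts are immediate from the inductive construction of $T_\alpha$, and (b) in particular shows that the root is the unique node of rank $\geq \gamma$.

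Parts (i), (ii), and (iv) are then just unpacking. For (i), $\rank(\sigma) = \alpha$ together with fact (a) gives $\rank(\sigma^-) \geq \alpha + 1$, so $\sigma \in A_{\alpha+1}$. For (ii), membership of $\sigma$ in $A_{\alpha+1}\setminus N_\alpha$ forces $\rank(\sigma) < \alpha$ (since $\rank(\sigma) < \alpha+1$ and $\rank(\sigma) \neq \alpha$), while $\rank(\sigma^-) \geq \alpha + 1 \geq \alpha$ is already the parent condition for $A_\alpha$. For (iv), fact (b) makes $\rank(\sigma^-) \geq \gamma$ equivalent to $\sigma^-$ being the root, i.e., to $\sigma$ being a successor of the root; the condition $\rank(\sigma) < \gamma$ then holds automatically by (b) again.

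The only part requiring more than a line is (iii). For the antichain property, if $\sigma \prec \tau$ were both in $A_\alpha$ then $\sigma$ would be an ancestor of $\tau^-$ (or equal to it), giving $\rank(\sigma) \geq \rank(\tau^-) \geq \alpha$ and contradicting $\rank(\sigma) < \alpha$. For maximality, given any $\sigma \in T$, I split into two cases. If $\rank(\sigma) < \alpha$, I walk along the chain of ancestors from $\sigma$ to the root; since ranks strictly increase along this chain and eventually reach $\gamma \geq \alpha$, there is a unique ancestor $\tau$ of $\sigma$ (possibly $\tau = \sigma$) with $\rank(\tau) < \alpha \leq \rank(\tau^-)$, and this $\tau$ lies in $A_\alpha$. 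If instead $\rank(\sigma) \geq \alpha$, I first extend $\sigma$ down to any leaf (which has rank $0 < \alpha$) and apply the previous case to that leaf, producing a descendant of $\sigma$ in $A_\alpha$. In either case $\sigma$ is comparable to an element of $A_\alpha$.

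No step presents a genuine obstacle, which is consistent with the author's remark that the lemma follows immediately from the definitions; the only mildly substantive ingredient is the case split in the maximality argument for (iii).
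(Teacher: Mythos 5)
Your proof is correct and fills in exactly the routine verification that the paper leaves to the reader (the paper simply marks the lemma with \qed and says it "follows immediately from these definitions"). Your two facts — rank strictly decreases from parent to child, and only the root of $T_\gamma$ has rank $\geq \gamma$ — do carry all four parts, and your case split for maximality in (iii), together with the observation that the crossing node of rank threshold $\alpha$ on any root-to-leaf path is unique and lies on the correct side of $\sigma$, is the only place where any care is needed.
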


Because $T_\gamma$ is constructed using the same functions $\pi_\alpha$, for $\alpha \le \gamma$ as $X^\gamma$, there is a natural homeomorphism from $2^{N_0}$ to $X^\gamma$. This is defined as follows. We map $f \in 2^{N_0}$ to $x \in X^\gamma$ where for all sequences $i_1\concat i_2\concat \ldots \concat i_k \in N_0$ we have that 
\[x(i_1)(i_2)\ldots(i_k) = f(i_1\concat i_2\concat \ldots \concat i_k).\]
To simplify the exposition, let us identify $X^\gamma$ with $2^{N_0}$ and so we can regard both $\FF^\gamma$ and $\II^\gamma$ as subsets of $2^{N_0}$. 

We will now define a family of continuous maps $\rho_g$ from $X^1$ to
$X^\gamma$. 
Suppose $x\in X^1$ and $g$ is a function from $T  \setminus (N_0 \cup
\{\emptystring\})$ to $2^{<\omega}$. Then there is a unique  mapping
$f_{g,x}: T \setminus \{\emptystring\} \rightarrow 2$  with the following properties. 
\begin{enumerate}
 \item For all $i \in \N$, $f_{g,x}(i) = x(i)$. 
 \item For all nodes $\sigma \in T \setminus (N_0 \cup \{\emptystring\})$,  if $|g(\sigma)|=n$, then 
 \begin{enumerate}
 \item  For all $i < n$, $f_{g,x}(\sigma\concat i) = g(\sigma)(i)$
 \item $f_{g,x}(\sigma \concat n) = 1-f_{g,x}(\sigma)$
 \item For all $i >n$, $f_{g,x}(\sigma\concat i) = f_{g,x}(\sigma)$. 
\end{enumerate}
\end{enumerate}
Our idea is for that each $\sigma$, the values of $f_{g,x}(\sigma \concat
i)$ for $i \in \omega$ should eventually reach a limit equal to
$f_{g,x}(\sigma)$. The finitely many values before the limit is reached are
specified by $g(\sigma)$ 

If we fix $g$, we now have a continuous mapping $\rho_g:X^1 \rightarrow
X^\gamma$ defined by $\rho_g(x) = f_{g,x}\until{N_0}$. This mapping is not surjective, but it does have the following important property.
\begin{lemma}
If $x \in \FF^1$, then $\rho_g(x) \in \FF^\gamma$ and if $x \in \II^1$,
then $\rho_g(x) \in \II^\gamma$. \qed
 \end{lemma}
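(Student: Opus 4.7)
The plan is to prove, by transfinite induction on $\alpha \le \gamma$, the following strengthened claim: for every $\sigma \in T$ with $\rank(\sigma) = \alpha$, let $\tilde f_\sigma$ denote the restriction of $f_{g,x}$ to the leaves of $T$ above $\sigma$, viewed as an element of $X^\alpha$ under the canonical identification of the subtree of $T$ rooted at $\sigma$ with $T_\alpha$. Then $\tilde f_\sigma \in \FF^\alpha$ iff $f_{g,x}(\sigma) = 1$, and $\tilde f_\sigma \in \II^\alpha$ iff $f_{g,x}(\sigma) = 0$. The base case $\alpha = 0$ is immediate since $\sigma$ is a leaf, $\tilde f_\sigma = f_{g,x}(\sigma) \in 2 = X^0$, and $\FF^0 = \{1\}$, $\II^0 = \{0\}$.

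The lemma then drops out by specializing the claim to the top-level children of the root: since $f_{g,x}(i) = x(i)$ and $\rho_g(x)(i) = \tilde f_{(i)} \in X^{\pi_\gamma(i)}$, the claim yields $\rho_g(x)(i) \in \FF^{\pi_\gamma(i)}$ iff $x(i) = 1$, so cofinitely many $i$ have $\rho_g(x)(i) \in \FF^{\pi_\gamma(i)}$ iff $x \in \FF^1$, which is precisely the definition of $\rho_g(x) \in \FF^\gamma$. The $\II$-version is symmetric.

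For the inductive step at $\alpha > 0$, the key observation is that for every internal node $\sigma$ with $|g(\sigma)| = n$, the construction of $f_{g,x}$ forces $f_{g,x}(\sigma \concat i) = f_{g,x}(\sigma)$ for every $i > n$, so cofinitely many children of $\sigma$ share $\sigma$'s label. If $f_{g,x}(\sigma) = 1$, the inductive hypothesis places $\tilde f_{\sigma \concat i} \in \FF^{\pi_\alpha(i)}$ for all such $i$, and the defining existential--universal clause for $\FF^\alpha$ is directly satisfied. Conversely, assuming $\tilde f_\sigma \in \FF^\alpha$, cofinitely many $i$ have $\tilde f_{\sigma \concat i} \in \FF^{\pi_\alpha(i)}$; intersecting with the cofinite set of $i$ for which $\sigma \concat i$ shares $\sigma$'s label, and using the disjointness $\FF^\beta \cap \II^\beta = \emptyset$ for $\beta < \alpha$ (itself a trivial induction), we rule out $f_{g,x}(\sigma) = 0$. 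The $\II^\alpha$ case runs along identical lines.

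There is no real obstacle here: the whole point of the recursive definition of $f_{g,x}$ is that each internal node's label is the limit of its children's labels, so $\FF$- and $\II$-membership propagate up the tree level by level. The only point requiring care is the bookkeeping used to identify the subtree above $\sigma$ with $T_\alpha$ and its leaf-function with an element of $X^\alpha$, but this compatibility is automatic from the uniform use of the functions $\pi_\beta$ in the recursive definitions of both $T_\gamma$ and $X^\gamma$.
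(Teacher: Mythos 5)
The paper presents this lemma with no proof (just a \qed), treating it as immediate from the construction; your transfinite induction is exactly the argument being left implicit, and it is correct. The only cosmetic quibble is that your strengthened claim should be stated for $\sigma \in T \setminus \{\emptystring\}$ rather than all $\sigma \in T$, since $f_{g,x}(\emptystring)$ is undefined, but your actual use of the claim (at children of the root and in the inductive step) never touches the root, so nothing is affected.
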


We will soon examine what happens to the mapping $\rho_g$ if we take $g$ to
be sufficiently generic. In the following, by generic, we will always mean for the
Cohen partial order of finite partial functions ordered by inclusion. In
particular, suppose $f$ is a function between computable sets $A$ and $B$
(for example, from $T \setminus (N_0 \cup \{\emptystring\})$ to $2^{<\omega}$),
and $C$ is a computable subset of $A$.  Then we say that $f \until{C}$ is
\define{arithmetically generic relative to $z \in \cantor$}, if for every
arithmetically definable dense open set $D$ of finite partial functions
from $C$ to $B$, there is a finite partial function $p \subset f \until{C}$
such that  $p$ meets $D$. We say that $f,g$ are mutually $z$-generic,
if $f \oplus g$ is $z$-generic, where $f \oplus g$ is defined on the disjoint union of the domains of $f$ and $g$.

Suppose $h$ is a function from $\omega$ to $2^{< \omega}$, and $y \in
\cantor$. Fix a computable pairing function $\seq{\cdot,\cdot} \from
\omega^2 \to \omega$.
Define $h_{y} \in \cantor$ as follows. For all $i,j$, 
\[h_{y}(\seq{i,j}) = 
\begin{cases}
 h(i)(j) &\mbox{if }j <| h(i)|\\
 1- y(i) &\mbox{if }j = | h(i)|\\
 y(i)& \mbox{otherwise.}
\end{cases}\]
Once again, here we are coding the real $y$ into $h_y$ where each bit
$y(i)$ is the limit $\lim_{j \to \infty} h_y(\seq{i,j})$, and the finitely
many values before this limit is reached are specified by $h(i)$. We now
have the following lemma from \cite[Lemma 2.3]{MSS}, where $x$ from that
paper corresponds to our $h$, $y$ from that paper corresponds to our $y$,
$J(x,y)$ is $h_y$, and we state our lemma only for the case $n = 1$ and
relativized to a real $z$. 

Recall finally that if $x \in \cantor$, we use $x'$ to denote the Turing
jump of $x$, and if $\alpha < \ck$, we use $x^{(\alpha)}$ to
denote the $\alpha$\textsuperscript{th} iterate of the Turing jump relative to $x$. 

\begin{lemma} 
\label{lem: next level}
Suppose $y,z \in \cantor$, and $g,h \from \omega \to 2^{< \omega}$ are mutually
arithmetically generic relative to $z$. 
Then $(g \oplus h_y \oplus z)' \equiv_T g \oplus h \oplus y \oplus z'$
uniformly.
 \end{lemma}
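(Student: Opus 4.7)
The plan is to prove the two Turing reductions separately, each via an explicit uniform reduction.

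For the easy direction $(g \oplus h_y \oplus z)' \geq_T g \oplus h \oplus y \oplus z'$, I would read $g$, $h_y$, $z$, and $z'$ off the jump directly, and then extract $h$ and $y$ from $(h_y)'$ by exploiting the convergent coding. For each column $i$, the sequence $j \mapsto h_y(\langle i, j\rangle)$ eventually stabilizes at $y(i)$, so $y$ is computable from $(h_y)'$ uniformly by taking limits. Given $y$, the value $|h(i)|$ is the unique $j$ at which $h_y(\langle i, j\rangle) = 1 - y(i)$, after which $h(i)(k) = h_y(\langle i, k\rangle)$ for $k < |h(i)|$. All of this is uniform.

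For the hard direction $(g \oplus h_y \oplus z)' \leq_T g \oplus h \oplus y \oplus z'$, I would use Cohen forcing with $y$ as a parameter. The definition of $h_y$ is a uniformly computable function $h_y = F(h, y)$, so $g \oplus h \oplus y$ uniformly computes $g \oplus h_y$, and any $\Sigma_1^{g \oplus h_y \oplus z}$ sentence translates into a $\Sigma_1^{g \oplus h \oplus y \oplus z}$ sentence of the form ``there exists $n$ such that $\varphi(g \restriction n, h \restriction n, y \restriction n, z \restriction n)$''. By the mutual arithmetic genericity of $g$ and $h$ over $z$, such a sentence holds of $(g, h)$ iff some finite condition $(\sigma, \tau) \subset (g, h)$ forces it in the standard product Cohen forcing of pairs of finite partial functions. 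A search over candidate conditions then decides the sentence, provided that the forcing relation itself can be evaluated from $g \oplus h \oplus y \oplus z'$.

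The main obstacle is precisely this last point: verifying that the forcing relation (with $y$ as a parameter) is computable from $g \oplus h \oplus y \oplus z'$ rather than the stronger $g \oplus h \oplus (y \oplus z)'$. The key observation is that the use of $h_y$ in any $\Sigma_0$ witness decomposes into finite uses of $h$ and $y$; once $h$ and $y$ are available directly in the target oracle, the remaining question of whether a given finite condition forces the sentence reduces to a $\Sigma_1(z)$ question with these finite uses as parameters, and hence is decidable from $z'$. Carrying out this decomposition uniformly, together with the standard forcing argument for generics, is the content of \cite[Lemma 2.3]{MSS}; the case at hand follows by setting $n = 1$ and relativizing to $z$.
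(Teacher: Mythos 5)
The paper does not actually prove this lemma: it simply cites \cite[Lemma 2.3]{MSS} (with a dictionary translating notation), so there is no in-paper argument for your sketch to match. Judged on its own, your easy direction is fine apart from a small slip: $|h(i)|$ need not be the \emph{unique} $j$ with $h_y(\langle i,j\rangle)=1-y(i)$, since $h(i)$ may itself contain that bit; it is, however, the \emph{largest} such $j$, which is still uniformly computable from $(h_y)'$, so the conclusion stands.

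The hard direction is where your sketch has a genuine gap. You assert that ``by the mutual arithmetic genericity of $g$ and $h$ over $z$'' the translated $\Sigma_1^{g\oplus h\oplus y\oplus z}$ sentence is decided by a finite condition $(\sigma,\tau)\subset(g,h)$. But $g$ and $h$ are generic only relative to $z$, not relative to $z\oplus y$, and a $\Sigma_1(z\oplus y)$ fact about $(g,h)$ is a priori decided only by a dense set of conditions that is $(z\oplus y)$-arithmetic, which genericity over $z$ does not see. The $\Pi_1$ side makes this concrete: ``no extension of $(\sigma,\tau)$ yields a halting witness'' quantifies over extensions $(\sigma',\tau')$ with arbitrarily large $\dom(\tau')$, hence over arbitrarily many \emph{new} bits of $y$ that cannot be bounded by $y\restriction\dom(\tau)$. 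Your ``key observation'' addresses the $\Sigma_1$ side (a fixed finite condition and finitely many $y$-bits suffice to certify halting) but not why a deciding condition must exist along $(g,h)$ in the first place. The missing idea is that when searching for a witnessing extension one may always choose $\tau'(i)$ on each \emph{new} column $i\notin\dom(\tau)$ to be longer than the use of the candidate computation, so that the computation never reads $y(i)$ for any new column; with that precaution the deciding dense set can be written with an internal quantifier over all finite assignments on new columns and becomes $z$-arithmetic (in fact $\Pi_2(z)$), and then $2$-genericity of $(g,h)$ over $z$ gives the required condition, checkable from $g\oplus h\oplus y\oplus z'$ as you describe. Without this extra step the argument does not go through; as you note, these are exactly the details supplied by \cite[Lemma 2.3]{MSS}.
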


We emphasize that for the previous lemma to hold, $g$ and $h$ do not need to be generic relative to $x$. The proof of the next lemma is standard.

\begin{lemma}
\label{lem: generic subset}
  Suppose $x,z \in \cantor$, and $g \from T  \setminus (N_0 \cup
\{\emptystring\}) \to 2^{<\omega}$ is arithmetically generic relative to $z$. 
 Let $E$ be an infinite computable subset of $T$ such that 
 \begin{enumerate}
 \item $E \cap (A_\gamma \cup \{\emptystring\}) = \emptyset$. 
 \item No infinite subset of $E$ shares a common predecessor
\end{enumerate}
 Let $A$ be an infinite computable subset of $T \setminus (N_0 \cup \{\emptystring\})$ such that no element of $A$ is a predecessor of an element of $E$. 
Then $g\until{A}$ and $f_{g,x}\until{E}$ are mutually arithmetically $z$-generic. 
\end{lemma}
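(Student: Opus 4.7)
The proof reduces the mutual genericity claim to the arithmetic $z$-genericity of $g$ by a pullback argument. Let $D$ be any $z$-arithmetic dense open subset of the joint Cohen forcing on $A \sqcup E$; I will exhibit a $z$-arithmetic dense open set $D^*$ in the $g$-forcing such that any $q \in D^*$ forces $(g\upharpoonright_A, f_{g,x}\upharpoonright_E)$ into $D$.

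The key structural observation concerns clause (a) in the definition of $f_{g,x}$: for any finite condition $q$ and any $\sigma = \sigma^- \concat i \in E$ with $\sigma^- \in \dom(q)$ and $|q(\sigma^-)| > i$, clause (a) dictates $f_{g,x}(\sigma) = q(\sigma^-)(i)$ for every extension $g \supseteq q$ and every $x$. Hence the finite joint condition
\[
s_q \;:=\; q\upharpoonright_A \;\cup\; \bigl\{(\sigma,\, q(\sigma^-)(i_\sigma)) : \sigma = \sigma^- \concat i_\sigma \in E,\ \sigma^- \in \dom(q),\ |q(\sigma^-)| > i_\sigma\bigr\}
\]
is computable from $q$ with no reference to $x$. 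I take $D^* := \{q : \exists p \in D,\ p \subseteq s_q\}$, which is then arithmetic in $z$.

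Density of $D^*$ is established as follows. Given a finite condition $q_0$, use the density of $D$ to extend $s_{q_0}$ to some $p \in D$ with finite $A$- and $E$-domains $A_p$ and $E_p$. I extend $q_0$ to a larger condition $q$ realizing $p$: set $q \upharpoonright_{A_p} := p \upharpoonright_{A_p}$ (consistent with $q_0$ because $p$ extends $s_{q_0}$); and for each $\sigma = \sigma^- \concat i_\sigma \in E_p$ not already in $\dom(s_{q_0})$, adjoin $\sigma^-$ to $\dom(q)$ with a string of length greater than $i_\sigma$ whose $i_\sigma$-th bit is $p(\sigma)$. The hypothesis $A \cap P = \emptyset$, where $P$ is the set of proper prefixes in $T \setminus (N_0 \cup \{\emptystring\})$ of elements of $E$, guarantees that the $A$-assignments and the $P$-assignments live at disjoint nodes of $T$ and cannot conflict; condition (ii) ensures that for each parent $\tau$ only finitely many $\sigma \in E_p$ lie just below $\tau$, so the bit-constraints on $q(\tau)$ (which lie at distinct positions) are simultaneously realizable in a single finite string.

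By the arithmetic $z$-genericity of $g$, some finite $q \subseteq g$ lies in $D^*$. The witnessing $p \in D$ satisfies $p \subseteq s_q$, and since clause (a) makes the case-(a) entries of $s_q$ stable under every extension of $q$ to a total $g$, $(g\upharpoonright_A, f_{g,x}\upharpoonright_E) \supseteq s_q \supseteq p$; hence the joint object meets $D$, as required. The heart of the argument — and its main potential obstacle — is the $x$-independence of $s_q$ delivered by clause (a): without it, $D^*$ would only be arithmetic in $z \oplus x$, and the bare $z$-genericity of $g$ would not suffice to force $g \in D^*$.
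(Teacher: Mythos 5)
There is a genuine gap in your density argument, and it is precisely the difficulty that the paper's proof is designed to work around.

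When you extend $s_{q_0}$ to $p \in D$, the set $E_p$ may contain nodes $\sigma = \sigma^-\concat i_\sigma$ whose parent $\sigma^-$ is \emph{already} in $\dom(q_0)$ but with $i_\sigma \ge |q_0(\sigma^-)|$. Such $\sigma$ are not in $\dom(s_{q_0})$, so your recipe tries to ``adjoin $\sigma^-$ to $\dom(q)$ with a string of length greater than $i_\sigma$.'' But this is impossible: in the Cohen forcing of finite partial functions ordered by inclusion, the value $q_0(\sigma^-) \in 2^{<\omega}$ is fixed once placed; extensions of $q_0$ may only add new nodes to the domain, never lengthen an existing string. For such $\sigma$, the value $f_{g,x}(\sigma)$ is pinned to $f_{g,x}(\sigma^-)$ (up to a flip) by clauses (b)/(c), and no extension of $q_0$ can control it. Concretely, taking $D = \{p : \sigma_0 \in \dom(p)\}$ for a fixed $\sigma_0 \in E$ and a $q_0$ with $\sigma_0^- \in \dom(q_0)$, $|q_0(\sigma_0^-)| \le i_{\sigma_0}$, you find that no extension of $q_0$ lies in your $D^*$, so $D^*$ fails to be dense.

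The paper handles exactly this obstruction: it isolates the set $P$ of $E$-nodes whose parent already lies in $\dom(p)$ (these are the uncontrollable ones), notes that $P$ is finite, and then builds $p^* \supseteq p$ by iterating over \emph{all} $2^{|P|}$ possible values of $f_{g,x}\upharpoonright_P$, each time extending further (using condition (ii) of the hypotheses to keep new parent-strings long enough that no new blocked nodes appear) so that every hypothetical assignment to $P$ leads into $D$. Your reformulation with $s_q$ and your observation about the $x$-independence of the clause-(a) portion are both on the right track, but you need this extra exhaustion step over $f_{g,x}\upharpoonright_P$ to make the pullback $D^*$ dense.
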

\begin{proof}
Suppose $D$ is an dense open set in the partial order for building $g \until{A} \oplus f_{g,x}
\until{E}$ which is arithmetically definable from $z$. Let $\P$ be the partial order for building $g$: the order of
finite partial functions from $(N_0 \cup
\{\emptystring\})$ to $2^{<\omega}$. We will define
a dense open set $D^*$ in $\P$ so
that if $g$ meets $D^*$, then $g \until{A} \oplus f_{g,x} \until{E}$ meets~$D$. 

Suppose $p \in \P$. 
Let $P$ be the set of elements of $E$ that have a predecessor in $\dom(p)$.
Note that if $\sigma \in P$, then we cannot effect the value of $f_{g,x}(\sigma)$ by extending
$p$. However, $P$ is finite and for any string $\sigma \in E \setminus P$ with $\sigma = n
\concat \tau$, if we extend $p$
to some $p^*$ where $p^*(\tau)$ has length $\geq n$, then $f_{g,x}(\sigma)
= \tau(n)$. By considering every possible value of $f_{g,x} \until{P}$ and
iteratively finding extensions for each of them,
we can find an
extension $p^*$ of $p$ so that $g \until{A} \oplus f_{g,x} \until{E}$ meets
$D$, no matter what the value of $f_{g,x} \restriction P$ is. Now define
$D^*$ to be the set union over all $p \in \P$ of such strings~$p^*$. 
\end{proof}

It does not matter for the purposes of our proof, but a more precise
calculation here is that if $g$ is $2$-generic relative to $z$, then $g
\until{A}$ and $f_{g,x} \until{E}$ will be mutually $1$-generic relative to
$z$. Similarly, in Lemma~\ref{lem: next level}, $g$ and $h$ are only
required to be $2$-generic for the conclusion to hold.

Following is our main technical lemma. 

\begin{lemma}
\label{lem: main}
  Suppose $x, z \in \cantor$, $\alpha < \gamma$, $g \from T  \setminus (N_0 \cup
\{\emptystring\}) \to 2^{<\omega}$ is arithmetically generic relative to $z$. 
 Then uniformly 
 \[(f_{g,x}\until{A_\alpha} \oplus g\until{L_\alpha} \oplus z)' \le_T
 f_{g,x}\until{A_{\alpha+1}} \oplus g\until{L_{\alpha+1}} \oplus z'.\]
\end{lemma}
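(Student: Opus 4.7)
The plan is to reduce the claim to Lemma~\ref{lem: next level} by a careful bookkeeping of index sets. The first step is to record the disjoint decompositions $L_{\alpha+1} = L_\alpha \sqcup N_\alpha$, $A_{\alpha+1} = N_\alpha \sqcup (A_\alpha \cap A_{\alpha+1})$, and $A_\alpha = (A_\alpha \cap A_{\alpha+1}) \sqcup C_\alpha$, where $C_\alpha$ denotes the set of immediate children in $T$ of nodes of $N_\alpha$. The first two are immediate from the rank definitions; the third uses that every child of a rank-$\alpha$ node has rank strictly less than $\alpha$ and so lies in $A_\alpha$, which identifies $C_\alpha$ exactly as $A_\alpha \setminus A_{\alpha+1}$.

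The key observation is that $f_{g,x}\until{C_\alpha}$ is, modulo a computable reindexing, precisely the encoding $h_y$ of Lemma~\ref{lem: next level} built from $h := g\until{N_\alpha}$ and $y := f_{g,x}\until{N_\alpha}$: the three clauses defining $f_{g,x}$ on the children of a node $\tau \in N_\alpha$ match term-for-term the three clauses in the definition of $h_y(\seq{i,j})$, with $i$ naming $\tau$ and $j$ ranging over child indices. Consequently, writing $G := g\until{L_\alpha} \oplus f_{g,x}\until{A_\alpha \cap A_{\alpha+1}}$ and viewing $G$ as the ``$g$'' parameter of Lemma~\ref{lem: next level}, the two sides of Lemma~\ref{lem: main} become, up to uniform Turing equivalence, the two sides of the reduction $(G \oplus h_y \oplus z)' \le_T G \oplus h \oplus y \oplus z'$.

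The main obstacle is verifying the mutual arithmetical genericity of $G$ and $h$ relative to $z$ that is needed to invoke Lemma~\ref{lem: next level}. This will be handled by applying Lemma~\ref{lem: generic subset} with $A := (L_\alpha \cup N_\alpha) \cap \dom(g)$ and $E := A_\alpha \cap A_{\alpha+1}$. The required property that no element of $A$ is a predecessor of any element of $E$ is built in, since the parents of $E$-nodes have rank strictly greater than $\alpha$ and so lie outside $L_{\alpha+1}$; the antichain-type condition on $E$ is satisfied because, by the definition of $E$, the parent of any $E$-node must have limit rank greater than $\alpha$, its children then have ranks cofinal in that rank, and so only finitely many siblings can have rank below $\alpha$. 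The one technical wrinkle is that $E$ may contain some children of the root (when $\pi_\gamma(n) < \alpha$ for some $n$); but for these nodes, $f_{g,x}$ equals $x$ at the corresponding index independently of $g$, so this part can be absorbed into the base parameter without affecting the genericity argument. Once mutual genericity of $G$ and $h$ is established, Lemma~\ref{lem: next level} delivers the uniform Turing reduction asserted by Lemma~\ref{lem: main}.
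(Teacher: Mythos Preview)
Your proposal is correct and follows essentially the same route as the paper's proof. Your sets $C_\alpha$ and $A_\alpha\cap A_{\alpha+1}$ are exactly the paper's $D$ and $E$, and you invoke Lemma~\ref{lem: generic subset} and Lemma~\ref{lem: next level} in the same way; you even flag the issue of root-children possibly lying in $E$ (violating hypothesis~(i) of Lemma~\ref{lem: generic subset}), which the paper leaves implicit, and your fix---removing the finitely many such nodes and absorbing their fixed $x$-values as finite information---is sound.
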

\begin{proof}
Separate $A_{\alpha}$ into two sets $D$ and $E$. Let $D$ be those elements of $A_\alpha$ who have predecessors in  $N_{\alpha}$ and let $E$ be the rest. Note that if $\sigma\concat i \in E$, then $\rank(\sigma) > \alpha \ge \rank(\sigma\concat i)+1$ and hence $\rank(\sigma)$ is a limit ordinal. Thus for some $m$, for all $n$ greater than $m$, $\rank(\sigma \concat m) >\alpha$ (as the tree is defined to have strictly increasing ranks at limits).   
It follows that for any $\sigma \in T$, there are not infinitely many
successors of $\sigma$ in $E$. By considering ranks, it follows that no element of $L_{\alpha+1}$ is a predecessor of an element of $E$. Hence by 
Lemma~\ref{lem: generic subset}, $g\until{L_{\alpha+1}}$ and  $f_{g,x}\until{E}$ are mutually arithmetically $z$-generic. As $L_{\alpha+1}$ is the disjoint union of $L_\alpha$ and $N_\alpha$, it follows that $g\until{N_\alpha}$ and 
$g\until{L_\alpha} \oplus f_{g,x}\until{E}$ are mutually arithmetically $z$-generic.

If we let $h = g\until{N_{\alpha}}$ and $y =
 f_{g,x}\until{N_{\alpha}}$, then $h_y = f_{g,x}\until{D} $ (under some uniform computable bijection) using the notation of  Lemma~\ref{lem: next level}. 
 Hence applying Lemma~\ref{lem: next level}, we have that,
\begin{align*}
(f_{g,x}\until{A_\alpha} \oplus g\until{L_\alpha} \oplus z)' 
&\equiv_T (  (f_{g,x}\until{E} \oplus g\until{L_\alpha})\oplus f_{g,x}\until{D} \oplus z)'\\
&\le_T f_{g,x}\until{E} \oplus g\until{L_\alpha} \oplus g\until{N_{\alpha}} \oplus  f_{g,x}\until{N_{\alpha}} \oplus z'.
\end{align*}
 Now as $L_{\alpha +1} = L_\alpha \cup N_{\alpha}$, and $A_{\alpha+1} = N_{\alpha} \cup E$. We obtain the desired result. 
\end{proof}

\begin{lemma}
  Suppose $x, z \in \cantor$, and $g \from T  \setminus (N_0 \cup
\{\emptystring\}) \to 2^{<\omega}$ is arithmetically generic relative to $z$. 
If $1 \le n < \omega$ and $n \leq \gamma$, then uniformly 
\[f_{g,x}\until {A_{n+1}}
\oplus g\until{L_{n+1}} \oplus0^{(n)} \ge_T (f_{g,x}\until{N_{0}})^{(n)}.\]
\end{lemma}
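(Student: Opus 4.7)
The proof will go by induction on $n$, with Lemma~\ref{lem: main} supplying both the base case and the inductive step. The key observation is that since $n$ ranges over finite ordinals, $0^{(n)}$ is arithmetic, so arithmetic in $0^{(n)}$ coincides with arithmetic, and hence an arithmetically generic $g$ is automatically arithmetically generic relative to $0^{(n)}$ for every $n < \omega$. This lets us apply Lemma~\ref{lem: main} repeatedly with $z = 0^{(n)}$.

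For the base case $n=1$, I would apply Lemma~\ref{lem: main} with $\alpha = 1$ and $z = \emptystring$. Since $A_1 = N_0$ (the leaf nodes), and since $L_1$ is disjoint from $\dom(g) = T \setminus (N_0 \cup \{\emptystring\})$ so that $g\until{L_1}$ is trivial, the conclusion of Lemma~\ref{lem: main} reads
\[
(f_{g,x}\until{N_0})' \;\le_T\; f_{g,x}\until{A_2} \oplus g\until{L_2} \oplus 0'
\]
uniformly, which is exactly the desired $n=1$ statement.

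For the inductive step, assume uniformly that $f_{g,x}\until{A_{n+1}} \oplus g\until{L_{n+1}} \oplus 0^{(n)} \ge_T (f_{g,x}\until{N_0})^{(n)}$. Taking the jump of both sides (a uniform operation) gives
\[
\bigl(f_{g,x}\until{A_{n+1}} \oplus g\until{L_{n+1}} \oplus 0^{(n)}\bigr)' \;\ge_T\; (f_{g,x}\until{N_0})^{(n+1)}.
\]
Now apply Lemma~\ref{lem: main} with $\alpha = n+1$ and $z = 0^{(n)}$, which is legitimate because $g$ is arithmetically generic relative to $0^{(n)}$. That lemma yields
\[
\bigl(f_{g,x}\until{A_{n+1}} \oplus g\until{L_{n+1}} \oplus 0^{(n)}\bigr)' \;\le_T\; f_{g,x}\until{A_{n+2}} \oplus g\until{L_{n+2}} \oplus 0^{(n+1)}
\]
uniformly. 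Chaining the two reductions completes the induction and gives the statement for $n+1$.

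The only real subtlety is ensuring the uniformity is preserved through iteration: each application of Lemma~\ref{lem: main} is uniform in its inputs, and the jump is a uniform operation, so the composition of $n$ such steps produces a single reduction that is uniform in the index $n$ (given as a parameter). The restriction to $n < \omega$ is essential because at the limit stage one could not collapse \emph{arithmetic in $0^{(n)}$} to plain arithmetic, and so the genericity hypothesis on $g$ would need strengthening; within the finite range, nothing beyond the inductive combination of the two lemmas is required.
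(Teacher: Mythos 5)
Your proof is correct and takes essentially the same approach as the paper: an induction on $n$ whose base case is trivial (the paper starts at $n=0$, you at $n=1$, both using $A_1 = N_0$) and whose inductive step chains the induction hypothesis with Lemma~\ref{lem: main} applied at $\alpha = n+1$, $z = 0^{(n)}$. Your explicit observation that arithmetic genericity relative to $\emptyset$ already gives arithmetic genericity relative to every $0^{(n)}$, $n < \omega$, is a useful clarification that the paper leaves implicit.
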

\begin{proof}
The result holds  if $n=0$ because $A_1=N_0$.  Consider the case $n+1$. We have,  by the previous lemma, and the  induction hypothesis, that uniformly
\begin{align*}
 f_{g,x}\until{A_{n+2}}\oplus g\until{L_{n+2}}  \oplus0^{(n+1)} 
 & \ge_T (f_{g,x}\until{A_{n+1}}\oplus g\until{L_{n+1}} \oplus 0^{(n)})'\\
 & \ge_T (f_{g,x}\until{N_0})^{(n+1)}. \qedhere
\end{align*}
\end{proof}

\begin{lemma}
  Suppose $x, z \in \cantor$, and $g \from T  \setminus (N_0 \cup
\{\emptystring\}) \to 2^{<\omega}$ is arithmetically generic relative to
$0^{(\alpha)}$.
If $\omega \le  \alpha \le \gamma$, then uniformly
\[f_{g,x}\until {A_\alpha} \oplus g\until{L_\alpha} \oplus 0^{(\alpha)} \ge_T
(f_{g,x}\until{N_{0}})^{(\alpha)}.\]
\end{lemma}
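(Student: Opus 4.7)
The plan is to proceed by transfinite induction on $\alpha$ in the range $\omega \le \alpha \le \gamma$, splitting into successor and limit cases. Throughout, the hypothesis that $g$ is arithmetically generic relative to $0^{(\alpha)}$ implies the same relative to $0^{(\beta)}$ for any $\beta \le \alpha$, so it supplies every genericity invocation in the argument below.

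For the successor step, suppose $\alpha = \beta + 1$; since $\alpha \ge \omega$ and $\omega$ is a limit, $\beta \ge \omega$, and the induction hypothesis gives uniformly
\[f_{g,x}\until{A_\beta} \oplus g\until{L_\beta} \oplus 0^{(\beta)} \ge_T (f_{g,x}\until{N_0})^{(\beta)}.\]
Applying the Turing jump to both sides and then Lemma~\ref{lem: main} with $z = 0^{(\beta)}$, the left side is uniformly computable from $f_{g,x}\until{A_{\alpha}} \oplus g\until{L_{\alpha}} \oplus 0^{(\alpha)}$, exactly as in the proof of the previous lemma.

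The limit case is the main obstacle. Set $\beta_n = \pi_\alpha(n)$, so $(\beta_n)$ is a computable cofinal sequence in $\alpha$. I will show that $f_{g,x}\until{A_\alpha} \oplus g\until{L_\alpha} \oplus 0^{(\alpha)}$ uniformly computes $f_{g,x}\until{A_{\beta_n}} \oplus g\until{L_{\beta_n}} \oplus 0^{(\beta_n)}$ for every $n$; applying the induction hypothesis (with the previous lemma covering those $\beta_n < \omega$) yields a uniform computation of $(f_{g,x}\until{N_0})^{(\beta_n)}$ for each $n$, and the effective join is $(f_{g,x}\until{N_0})^{(\alpha)}$ under the standard convention for iterating the jump along the computable fundamental sequence $(\beta_n)$. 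The inclusions $L_{\beta_n} \subset L_\alpha$ and $0^{(\beta_n)} \le_T 0^{(\alpha)}$ trivially handle the last two summands.

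The nontrivial piece is computing $f_{g,x}\until{A_{\beta_n}}$ from $f_{g,x}\until{A_\alpha} \oplus g\until{L_\alpha}$. For $\sigma \in A_{\beta_n}$, I trace the path from $\sigma$ up toward the root until I first meet an ancestor of rank $\ge \alpha$ (which exists since the root has rank $\gamma \ge \alpha$), and let $\tau$ be the immediately preceding node on the path, so $\tau \in A_\alpha$ and $\sigma$ lies below $\tau$ (possibly equal to it). Every node strictly between $\tau$ and $\sigma$ has rank strictly between $\rank(\sigma)$ and $\rank(\tau) < \alpha$, is a non-leaf (having $\sigma$ as a descendant) and non-root (lying properly below $\tau$), so it lies in $L_\alpha \setminus (N_0 \cup \{\emptystring\})$ and its $g$-value is available from $g\until{L_\alpha}$. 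Starting from $f_{g,x}(\tau)$, known from $f_{g,x}\until{A_\alpha}$, I walk down to $\sigma$ applying the defining equations of $f_{g,x}$ one step at a time, which is uniformly computable in $\sigma$.
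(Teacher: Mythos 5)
Your argument is correct and is essentially the paper's: reduce to the finite-level lemma at $\omega$ via the observation that $f_{g,x}\until{A_\beta}$ is uniformly computable from $f_{g,x}\until{A_\alpha}\oplus g\until{L_\alpha}$ for $\beta<\alpha$, handle successors by one more application of Lemma~\ref{lem: main}, and repeat the $\omega$-argument at later limits, taking the effective join along the fundamental sequence. You helpfully spell out the tree-walk computation the paper only states; the one small omission is that the walk from $\tau$ to $\sigma$ also uses $g(\tau)$ itself, not only $g$ on nodes strictly between $\tau$ and $\sigma$, which is available since when $\tau\neq\sigma$ we have $\tau\in A_\alpha\subset L_\alpha$ and $\tau\notin N_0\cup\{\emptystring\}$ (and similarly, for $\beta_n<\omega$ the finite-level lemma actually wants $A_{\beta_n+1}$, $L_{\beta_n+1}$, which are equally available since $\beta_n+1<\alpha$).
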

\begin{proof}
Note that if $\alpha>\beta$ then uniformly in $\alpha$ and $\beta$ we have
that $f_{g,x}\until{A_\beta}$ is computable from $f_{g,x}\until{A_\alpha} \oplus g\until{L_\alpha}$. 
Consider the case that $\alpha =\omega$. By the previous remark,  $f_{g,x}\until {A_\omega} \oplus g\until{L_\omega} \oplus
0^{(\omega)}$ uniformly computes $f_{g,x}\until {A_n} \oplus g\until{L_n}
\oplus 0^{(n)}$ for all $n$. Hence by Lemma~\ref{lem: main}, $f_{g,x}\until {A_\omega} \oplus g\until{L_\omega} \oplus
0^{(\omega)}$ uniformly computes
$(f_{g,x}\until{N_{0}})^{(n)}$ and so $(f_{g,x}\until{N_{0}})^{(\omega)}$.
For successors, just last repeat the previous lemma, and at limits repeat the argument for $\omega$. 
\end{proof}

The difference between the previous two lemmas  corresponds to a slight difference between the indexing of light-face Borel sets and  the iterates of the Turing jump. This discrepancy also occurs in the following standard lemma.

\begin{lemma}
A set $X \subset 2^\omega$ is $\DL{\alpha}$ if and only if there is an $e$ such that 
\begin{enumerate}
 \item For all $x$, $\Phi_e(x^{(\beta)} ; 0) \downarrow$.
 \item For all $x$, $\Phi_e(x^{(\beta)} ; 0) =1$ if and only if $x \in X$.
\end{enumerate}
 Where $\beta = \alpha$ if $\alpha \ge \omega$ and $\beta = \alpha -1$ if $ 1 \le\alpha <\omega$. 
\end{lemma}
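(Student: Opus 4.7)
This is the standard Post-style characterization of the lightface Borel classes, and I would prove it by transfinite induction on $\alpha$. The index shift ($\beta = \alpha - 1$ for finite $\alpha$, $\beta = \alpha$ for $\alpha \ge \omega$) mirrors the usual shift in Post's theorem: at finite stages $\SL{n+1}$ coincides with ``c.e.\ in $\emptyset^{(n)}$'', while at a limit $\lambda$ the effective join $\emptyset^{(\lambda)} = \bigoplus_n \emptyset^{(\pi_\lambda(n))}$ already absorbs one layer of quantification, so that $\SL{\lambda}$ is precisely ``c.e.\ in $\emptyset^{(\lambda)}$''.

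For $(\Leftarrow)$, given such an $e$, the set $X = \{x \colon \exists s\, \Phi_{e,s}(x^{(\beta)} \until{s}; 0) = 1\}$ is $\SL{1}$ in $x^{(\beta)}$ uniformly in $x$, and by totality of $\Phi_e$ on $x^{(\beta)}$ its complement is too. So it suffices to check that a set that is uniformly $\SL{1}$ in $x^{(\beta)}$ is $\SL{\alpha}$ in $x$. This is a routine induction using that the map $x \mapsto x^{(n)}$ is $\DL{n+1}$ and, at a limit $\lambda$, that $x \mapsto x^{(\lambda)}$ is $\DL{\lambda}$ (via the effective join of the lower jumps), which accounts exactly for the index shift.

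For $(\Rightarrow)$, I induct on $\alpha$. The base $\alpha = 1$ is immediate, since $\DL{1}$ sets are clopen. At a finite successor $\alpha = n+1$, I apply the relativized Post's theorem: a $\DL{n+1}$ set is uniformly decidable from $x^{(n)} = x^{(\beta)}$. At a limit $\alpha$ (including $\omega$), I write both $X$ and its complement as countable unions $\bigcup_n A_n$ and $\bigcup_n B_n$ with $A_n, B_n$ uniformly $\PL{\pi_\alpha(n)}$; the inductive hypothesis makes each component decidable from $x^{(\pi_\alpha(n))}$, so from $x^{(\alpha)}$ one obtains c.e.\ presentations of $X$ and its complement, and hence a decision procedure. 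At an infinite successor $\alpha = \gamma+1$ (where $\beta = \alpha$), any $\DL{\alpha}$ set has itself and its complement $\SL{\alpha}$, hence c.e.\ in $x^{(\gamma)}$ uniformly by the hypothesis one level down, so decidable from $(x^{(\gamma)})' = x^{(\alpha)}$. The one point requiring care is maintaining uniformity of the index $e$ through the recursion, which is guaranteed by the uniform computability of the $\pi_\alpha$'s fixed at the start of the section.
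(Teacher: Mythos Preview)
The paper does not actually prove this lemma: it introduces it with ``This discrepancy also occurs in the following standard lemma'' and then moves on, so there is nothing to compare your argument against. Your sketch is essentially the standard transfinite Post argument and is correct in outline.

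One small point worth tightening: in your $(\Rightarrow)$ direction at an infinite successor $\alpha=\gamma+1$ you write that a $\SL{\alpha}$ set is ``c.e.\ in $x^{(\gamma)}$ uniformly by the hypothesis one level down''. But your stated induction hypothesis concerns $\DL{\gamma}$, not $\SL{\gamma}$ or $\PL{\gamma}$, so as written it does not immediately give you that $\PL{\gamma}$ predicates are co-c.e.\ in $x^{(\gamma)}$. The clean fix is to carry the stronger statement ``$A$ is $\SL{\alpha}$ iff $A$ is uniformly $\SL{1}(x^{(\beta)})$'' through the induction and read off the $\DL{\alpha}$ characterization at the end; your successor step then goes through verbatim, since $\SL{\gamma+1}=\exists\,\PL{\gamma}$ becomes $\SL{2}(x^{(\gamma)})=\SL{1}(x^{(\gamma+1)})$. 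Similarly, your $(\Leftarrow)$ argument at infinite successors is easiest to justify once you know that the graph of $x\mapsto x^{(\gamma)}$ is $\DL{\gamma+1}$, which again comes out of the $\Sigma$-version of the induction rather than the $\Delta$-version. None of this is a real obstacle; it is just a matter of stating the induction hypothesis in the form that actually gets used.
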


\begin{lemma}If $q \leq \gamma \leq \ck$, 
then the sets $\FF^\gamma$ and $\II^\gamma$ are not separable by $\DL{\gamma+1}$ sets.
\end{lemma}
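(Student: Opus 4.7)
The plan is to derive a contradiction with Lemma~\ref{lem: 1 not separable} by pulling back a putative separation of $\FF^\gamma$ and $\II^\gamma$ along the continuous map $\rho_g$ for a suitably generic $g$. The basic picture is that the standard lemma lets us decide the separating set using the $\beta$-th jump of $\rho_g(x)$, and the preceding technical lemmas bound that jump by $x \oplus g \oplus 0^{(\beta)}$, which makes the pullback clopen in a parameter — and two disjoint clopen sets can only separate $\FF^1$ and $\II^1$ trivially.

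I would first suppose for contradiction that some $\DL{\gamma+1}$ set $A \subseteq X^\gamma$ satisfies $\FF^\gamma \subseteq A$ and $\II^\gamma \cap A = \emptyset$; the case $\gamma = 1$ is Lemma~\ref{lem: 1 not separable} itself, so take $\gamma \geq 2$. By the preceding standard lemma, I fix an index $e$ with $\Phi_e(y^{(\beta)};0) \downarrow$ for all $y$, and with value $1$ exactly when $y \in A$, where $\beta = \gamma$ if $\gamma < \omega$ and $\beta = \gamma+1$ if $\gamma \geq \omega$.

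The core step is to fix $g$ arithmetically generic relative to $0^{(\gamma)}$ and assemble the earlier reduction lemmas into a single uniform bound
\[\rho_g(x)^{(\beta)} \leq_T x \oplus g \oplus 0^{(\beta)}.\]
For infinite $\gamma$, the lemma above (the one for $\omega \leq \alpha \leq \gamma$) applied at $\alpha = \gamma$ yields $\rho_g(x)^{(\gamma)} \leq_T f_{g,x}\upharpoonright_{A_\gamma} \oplus g\upharpoonright_{L_\gamma} \oplus 0^{(\gamma)} = x \oplus g \oplus 0^{(\gamma)}$, using that $A_\gamma$ is the set of successors of the root (carrying exactly the values $x(i)$) and that $L_\gamma$ contains the whole domain of $g$; one additional Turing jump, handled by Lemma~\ref{lem: next level} with $h$ trivial and $y = x$, gives $(x \oplus g \oplus 0^{(\gamma)})' \equiv_T x \oplus g \oplus 0^{(\gamma+1)}$. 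For finite $\gamma$, the finite-$n$ version with $n = \gamma - 1$ gives $\rho_g(x)^{(\gamma-1)} \leq_T x \oplus g \oplus 0^{(\gamma-1)}$, and again one invocation of Lemma~\ref{lem: next level} passes us to $\rho_g(x)^{(\gamma)} \leq_T x \oplus g \oplus 0^{(\gamma)}$.

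With the reduction in hand I would set
\[A^* = \{x \in X^1 : \Phi_e(\rho_g(x)^{(\beta)};0) = 1\}, \qquad B^* = \{x \in X^1 : \Phi_e(\rho_g(x)^{(\beta)};0) = 0\}.\]
Fixing the real parameter $p = g \oplus 0^{(\beta)}$, membership in $A^*$ is decided by a total $p$-computable function of $x$, so for each $x$ only finitely many bits of $x$ are queried; hence $A^*$ and $B^*$ are clopen in $X^1$, in particular disjoint $\bfPi^0_2$ sets. Since $\rho_g$ maps $\FF^1$ into $\FF^\gamma \subseteq A$ and $\II^1$ into $\II^\gamma$, which is disjoint from $A$, we obtain $\FF^1 \subseteq A^*$ and $\II^1 \subseteq B^*$, contradicting Lemma~\ref{lem: 1 not separable}. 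The main source of friction I anticipate is the bookkeeping at the boundary $\gamma = \omega$: $\DL{\gamma+1}$-separation corresponds to the $\gamma$-th jump for finite $\gamma$ but to the $(\gamma+1)$-th jump for $\gamma \geq \omega$, so the level at which the final appeal to Lemma~\ref{lem: next level} occurs differs in the two regimes, although a single $g$ that is arithmetically generic relative to $0^{(\gamma)}$ comfortably serves both cases.
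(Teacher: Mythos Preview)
Your overall strategy matches the paper's, but the final step has a genuine error. You claim that one more application of Lemma~\ref{lem: next level} ``with $h$ trivial and $y=x$'' yields $(x\oplus g\oplus 0^{(\gamma)})'\equiv_T x\oplus g\oplus 0^{(\gamma+1)}$, and hence that $A^*,B^*$ are clopen. This invocation is invalid: Lemma~\ref{lem: next level} requires $g$ and $h$ to be \emph{mutually} arithmetically generic, i.e.\ $g\oplus h$ must itself be generic, and no constant (or otherwise computable) $h$ can satisfy that. Worse, the conclusion you want is simply false for arbitrary $x$: with $x=(g\oplus 0^{(\gamma)})'$ the left side computes $(g\oplus 0^{(\gamma)})''$, while the right side is Turing equivalent to $g\oplus 0^{(\gamma+1)}$, strictly below it. Since $g$ is fixed once and for all and $x$ ranges over all of $X^1$, you cannot arrange any genericity of $g$ over $x$.

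The fix is to drop the overclaim. From $\rho_g(x)^{(\gamma)}\le_T x\oplus g\oplus 0^{(\gamma)}$ (infinite case) one gets, by monotonicity of the jump, $\rho_g(x)^{(\gamma+1)}\le_T (x\oplus g\oplus 0^{(\gamma)})'$ uniformly; hence membership of $x$ in $A^*$ is $\DL{2}(g\oplus 0^{(\gamma)})$, so $A^*,B^*$ are disjoint $\DB{2}$ (in particular $\PB{2}$) sets separating $\FF^1$ and $\II^1$, contradicting Lemma~\ref{lem: 1 not separable}. The finite case is identical with $\gamma-1$ in place of $\gamma$. This is exactly what the paper does: it never asserts the pullbacks are clopen, only $\DL{2}$ in a fixed parameter.
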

\begin{proof}
 Assume that $A$ and $B$  are disjoint $\DL{\gamma+1}$ sets  with 
 $\FF^\gamma \subset A$ and $\II^\gamma \subset B$. We can additionally
 assume that $A \cup B = X^\gamma$.   Fix $g$ which is arithmetically
 generic relative to $0^{(\gamma)}$. We have the associated function
 $\rho_g\from X^1\to X^\gamma$ as above. 
 
First consider if $\gamma \ge \omega$. Then from $(\rho_g(x))^{(\gamma+1)}$
we can compute whether $\rho_g(x) \in A$ or $\rho_g(x) \in B$.  Hence we can compute this from  
 $(x \oplus g\until{L_\gamma} \oplus 0^{(\gamma)})'$. Note that
 $g\until{L_\gamma} \oplus 0^{(\gamma)}$ is fixed for all $x$. Hence $\FF^1$ and $\II^1$ are separable by 
 $\DL{2}(g\until{L_\gamma} \oplus 0^{(\gamma)})$ sets. But this is a contradiction.
If $1 \le \gamma  <\omega$ then the same argument works after replacing occurrences of $\gamma$ with $\gamma -1$.
\end{proof}

Theorem~\ref{thm: non-separation} follows immediately from relativizing this lemma and observing that if $\FF^\gamma$ and $\II^\gamma$ could be separated by $\PB{\gamma+1}$ sets, then they could be separated by $\DB{\gamma+1}$ sets. 
 
\bibliographystyle{plain}
\bibliography{references}

\end{document}